\providecommand{\abs}[1]{\left| #1 \right|}   
\providecommand{\norm}[1]{\left\| #1 \right\|}  
\providecommand{\jump}[1]{\llbracket #1 \rrbracket}
\providecommand{\dualpair}[1]{ \langle #1 \rangle }
\numberwithin{equation}{section}
\newtheorem{Theorem}{Theorem}[section]
\newtheorem{Lemma}{Lemma}[section]
\newtheorem{Proposition}{Proposition}[section]
\theoremstyle{definition}
\newtheorem{Definition}{Definition}[section]
\theoremstyle{remark}
\newtheorem{Remark}{Remark}[section]
\DeclareMathOperator*{\esssup}{ess\,sup}
\author{Tian Jing}
\address{Department of Mathematics, University of Pittsburgh,
                           Pittsburgh, PA 15260, USA.}
\email{tij11@pitt.edu}
\title[Varifold Solutions of the two-phase 3-D MHD equations]
{Varifold solutions of the two-phase three-dimensional magnetohydrodynamic equations}
\keywords{3-D MHD, two-phase, global solutions, varifold solutions}
\subjclass[2010]{35Q35, 76D05, 76W05, 76D27, 76D45, 76T99}
\numberwithin{equation}{section}
\numberwithin{figure}{section}
\numberwithin{table}{section}
\begin{document}

\begin{abstract}
In this paper we study the three-dimensional two-phase magnetohydrodynamic interface problem in a bounded domain. The two incompressible fluids are both Newtonian and the surface tension is considered.  We shall use the Galerkin method to construct the approximate solutions in a bounded domain. Due to the magnetic field  in the magnetohydrodynamic equations, we cannot use the method of monotone operators to solve the approximate equations. Instead, we will construct an iterating operator and solve the equations by finding the fixed-point of the operator. To deal with the free interface,  
we  shall  prove the compactness of the iterating operator and then use 
the Schauder fixed-point theorem. 
The existence of the varifold solution is established by the weak convergence method. 
\end{abstract}

\date{\today}

\maketitle

\section{Introduction and Main Results}\label{intro}

In this paper, we study the two-phase magnetohydrodynamic (MHD) problem of two immiscible Newtonian fluids which are incompressible, viscous  and conducting, 
in a three-dimensional bounded, 
simply connected smooth domain $\Omega\subseteq \mathbb{R}^3$. 
The domains of the two fluids are denoted by open sets $\Omega^+ (t)$ and $\Omega^- (t)$. 
The interface between them is defined as
 $\Gamma(t):=\partial\Omega^+ (t)\setminus \partial \Omega$.
The sets $\Omega^+ (t) $ ,  $\Omega^- (t)$ and $\Gamma(t)$ give a partition of $\Omega$.
We assume that the density equals to 1 everywhere 
and consider the following equations:
\begin{align}
\partial_{t}u+u\cdot\nabla u-(\nabla\times B)\times B-\nu^{\pm}\triangle u+\nabla p=0 & \ \ \ \text{in }\Omega^{\pm}(t)\label{eq:s,u}, \\
\partial_{t}B-\nabla\times(u\times B)+\nabla\times(\sigma\nabla\times B)=0 &  \ \ \ \text{in }\Omega\label{eq:s,B},  \\
{\rm div}u=0 &  \ \ \ \text{in }\Omega^{\pm}(t),  \\
{\rm div}B=0 &  \ \ \ \text{in }\Omega \label{divB 0},  \\
 - \jump{ 2\nu(\chi) Du -pI }n = \kappa Hn &  \ \ \  \text{on }\Gamma(t)\label{eq:s,interface},  \\
V_{\Gamma}=n\cdot u &  \ \ \ \text{on }\Gamma(t)\label{eq:s,interface speed},  \\
u|_{\partial\Omega}=0, & \ B|_{\partial\Omega}=0,  \\
u|_{t=0}=u_{0}, & \ B|_{t=0}=B_{0},  \label{eq:s,initial}
\end{align}
where	
$ u\in\mathbb{R}^3$ is the velocity,
$ B\in\mathbb{R}^3$ the magnetic field,
$ \sigma>0$ the magnetic diffusion coefficient of both fluids, 
$\nu^+$, $ \nu^-\geq 0$ the viscosity coefficients 
of the two fluids,
$ \kappa\geq 0$ the surface tension coefficient; 
The quantities
$V_{\Gamma}$, $n$, $H$ are all defined pointwisely on the interface $\Gamma(t)$,
where 
$V_{\Gamma}$ denotes the velocity of the interface,
$n$ the normal vector,
$H$ the mean curvature;
The term
$Du:=(\nabla u+\nabla u^{T})/2$
is the strain rate tensor and $|Du|$ is the shear rate.
In order to study the positions of $\Omega^+ (t)$ and $\Omega^- (t)$, 
we consider the indicator function of $\Omega^+ (t)$, i.e. $\chi(t):=\chi_{\Omega^+ (t)}$.
Let $\nu$ be such that $\nu(1)=\nu^+$ and $\nu(0)=\nu^-$. 
Then we can use $\nu(\chi(t,x))$ for the viscosity.
 The notation $\jump{f}$ denotes the jump of  $f$  across $\Gamma(t)$.

We briefly review some related results.
When there is no magnetic field $B$, the problem becomes the two-phase Navier-Stokes equations. 
The problem of varifold solutions was first studied by Plotnikov \cite{Plotnikov}.
In his paper, the case of two incompressible non-Newtonian fluids with surface tension 
has been considered in $\mathbb{R}^2$.
In the seminal work \cite{Abels1},  Abels   proved the existence of varifold solutions 
in more general cases, 
where the viscosity coefficients 
depend on the shear rate $\left|Du\right|$. 
From \cite{Abels1}, there exists a weak solution when 
$\kappa=0$ and a measure-valued varifold solution when $\kappa>0$.
For the case of $\kappa>0$, the equations have been studied 
in $\mathbb{R}^2$ and $\mathbb{R}^3$. 
When the viscosity coefficients are constants, Yeressian \cite{Yere}
has proved the existence of varifold solutions  in $\mathbb{R}^3$.
When the strong solution exists, Fischer and Hensel  proved the weak-strong uniqueness in \cite{Fischer}
with the technique of relative entropy.
For interested readers we also refer to \cite{Salvi,Pruss,Nouri}.

Since the problem with $\kappa>0$ has been studied 
in $\mathbb{R}^2$ and $\mathbb{R}^3$
in \cite{Abels1} and  \cite{Yere}
for the Navier-Stokes equations,
in this paper we are interested in the problem in a bounded domain $\Omega$
for the magnetohydrodynamics, for which the  both viscosity coefficients $\nu^\pm$ are also taken to be  (different) constants.
In \cite{Abels1} and  \cite{Yere} the approximate equations are derived 
by mollifying the original equations.
In the case of a bounded domain, it will be complicated to mollify the equations near the boundary of the domain. 
Thus, we will use the Galerkin method to construct the approximate solutions in this paper.

We first give the definitions of varifold solutions and weak solutions
based on the definitions in \cite{Abels1}. The space $\mathbb{R}^d$ is replaced by  $\Omega$ in an appropriate way.
Some boundary conditions are also included.
\begin{Definition}[Varifold solution]
\label{def:vari solu} 
Let $u_{0}, B_{0}\in L^2(\Omega)$
such that ${\rm div}u_{0}={\rm div}B_{0}=0$ weakly. Let $Q_T := \Omega\times(0,T)$.
Let $\Omega_{0}^{+}\subseteq\Omega$ be a bounded domain such that
$\chi_{0}=\chi_{\Omega_{0}^{+}}$ is of finite perimeter. 
A quadruple $(u,B,\chi,V)$ with
\begin{align*}
&u\in L^2([0,T];H_0^1(\Omega))\cap L^{\infty}([0,T];L^2(\Omega)),\\
&B\in L^2([0,T];H_0^1(\Omega))\cap L^{\infty}([0,T];L^2(\Omega)),\\
&{\rm div}u={\rm div}B=0 , \\
&\chi\in L^{\infty}([0,T];BV(\Omega;\{0,1\})),\\
&V\in L^{\infty}([0,T];\mathcal{M}(\Omega\times\mathbb{S}^2)),
\end{align*}
is called a varifold solution to the two-phase flow
problem \eqref{eq:s,u}-\eqref{eq:s,initial} with the initial data $(u_0,B_0,\chi_0)$ if
\\
(1).
\begin{equation}   \label{def vari solu main}
\begin{aligned}
-&(u_{0},\varphi(0))_{\Omega}  -(u,\partial_{t}\varphi)_{Q_T}-(u\otimes u,\nabla\varphi)_{Q_{T}}+(B\otimes B,\nabla\varphi)_{Q_{T}}\\
 & +\left(2\nu(\chi)Du,D\varphi\right)_{Q_{T}}+\kappa\int_{0}^{T}\left\langle \delta V(t),\varphi(t)\right\rangle {d}t=0 
\end{aligned}
\end{equation}
is satisfied for all $\varphi\in C_{c}^{\infty}([0,T)\times\Omega)$ with ${\rm div}\varphi=0$;
\\
(2).
\begin{equation}
-(B_0,\varphi(0))_{\Omega}-(B,\partial_t \varphi)_{Q_T}
-(u\otimes B,\nabla\varphi)_{Q_T}+(B\otimes u,\nabla\varphi)_{Q_T}
+\sigma(\nabla B,\nabla\varphi)_{Q_T}
=0
\end{equation}
is satisfied for all $\varphi\in C_{c}^{\infty}([0,T)\times\Omega)$ with ${\rm div}\varphi=0$;
\\
(3). For almost every $t\in[0,T]$,
\begin{equation}
\begin{aligned}
\int_{\Omega\times\mathbb{S}^2}s\cdot\psi(x){d}V(t)(x,s) & =-\int_{\Omega}\psi{d}\nabla\chi(t)
\end{aligned}
\end{equation}
is satisfied for all  $\psi\in C_{0}(\Omega)$;
\\
(4).
The indicator function
$\chi$ is the unique
renormalized solution of 
\begin{equation}
\begin{aligned}
& \partial_{t}\chi+u\cdot\nabla\chi=0  \ \ \ \text{in }  \  (0,T)\times\Omega,\\
& \chi|_{t=0}=\chi_{0}   \ \ \  \text{in }   \  \Omega;
\end{aligned}
\end{equation}
\\
(5).
The generalized energy inequality 
\begin{equation}
\begin{aligned}
\frac{1}{2}\left\| u(t)\right\| _{L^2}^{2}+\frac{1}{2}\left\| B(t)\right\| _{L^2}^{2}+\kappa\left\| V(t)\right\| _{\mathcal{M}(\Omega\times\mathbb{S}^2)}+2\int_{0}^{t}\int_{\Omega}\nu(\chi)|Du|^2{d}x{d}s\\
+\sigma\int_0^t \left\| \nabla B(s)\right\| _{L^2}^{2}{d}s 
\leq\frac{1}{2}\left\| u_{0}\right\| _{L^2}^{2}+\frac{1}{2}\left\| B_{0}\right\| _{L^2}^{2}+\kappa\left\| \nabla\chi_{0}\right\| _{\mathcal{M}(\Omega)}
\end{aligned}
\end{equation}
holds for almost every $t\in[0,T]$;
\end{Definition}

\begin{Remark}
The notation $(\cdot ,\cdot )_{\Omega}$ and $(\cdot ,\cdot )_{Q_T}$ stands for 
the inner product in $L^2(\Omega)$ and $L^2(Q_T)$.
For details about the renormalized solutions, see Proposition 2.2 in \cite{Abels1}.
The term $\delta V $ in \eqref{def vari solu main} is the first variation of the measure $V$.
The definitions of $\delta V $ and $ \left\langle \delta V(t),\cdot\right\rangle $
are in Section \ref{prelim varifold}.
The initial energy is:
\begin{equation}
E_{0}:=
\frac{1}{2}\left\| u_{0}\right\| _{L^2}^{2}+\frac{1}{2}\left\| B_{0}\right\| _{L^2}^{2}+\kappa\left\| \nabla\chi_{0}\right\| _{\mathcal{M}}.
\end{equation}
\end{Remark}

\begin{Definition}[Weak solution]
Let $(u,B,\chi,V)$ be a varifold solution of the two-phase flow
problem \eqref{eq:s,u}-\eqref{eq:s,initial} with  the initial data $(u_0,B_0,\chi_0)$ 
as in Definition \ref{def:vari solu}.
Then the triple $(u,B,\chi)$ is called a weak solution if
for almost every  $t\in [0,T]$, the equality
\begin{align*}
\left\langle \delta V(t),\varphi\right\rangle =-\left\langle H_{\chi(t)},\varphi\right\rangle :=\int_{\Omega}P_{\tau}:\nabla\varphi{d}|\nabla\chi(t)|
\end{align*}
holds for all $\varphi\in C_{c}^{\infty}(\Omega)$ with ${\rm div}\varphi=0$. Here $P_{\tau}:=I-n\otimes n$ and
$n:=\nabla\chi(t)/\left|\nabla\chi(t)\right|$.
\end{Definition}

\begin{Remark}
The term $\chi$ contains all the information to define the mean curvature functional $H_{\chi}$;
see Section \ref{prelim mean curv} for details.
The term $\nabla\chi(t)$ is a vector-valued Radon measure on $\Omega$
and $\left|\nabla\chi(t)\right|$ is the total variation measure of $\nabla\chi(t)$.
Thus, the normal vector $n$ can be defined using
the Radon-Nikodym derivative.
See Section \ref{prelim} for details.
The varifold solution is weaker than the weak solution, 
since the weak limits of some terms are represented by measures.
\end{Remark}

The main result of this paper is given as the following:
\begin{Theorem}\label{main theorem}
Let $\Omega\subseteq \mathbb{R}^3$ be a bounded, smooth and simply connected domain;
$u_0, B_0 \in L^2(\Omega)$ satisfy ${\rm div}u_0= {\rm div}B_0 =0$;
and $\chi_{0}:=\chi_{\Omega_{0}^{+}}$, 
where $\Omega_{0}^{+}\subseteq\Omega$ is a 
simply connected $C^2$-domain such that $\overline{\Omega_0^+} \subseteq \Omega$.
Then for any $T>0$, there exists a 
varifold solution to the two-phase flow
problem \eqref{eq:s,u}-\eqref{eq:s,initial} on $[0,T]$ with the initial data $(u_0,B_0,\chi_0)$.
\end{Theorem}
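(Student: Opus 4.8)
The plan is to construct solutions in three stages: a Galerkin truncation in the divergence-free velocity and magnetic fields; a fixed-point argument, at each truncation level, coupling the momentum equation to the transported phase indicator $\chi$; and a weak-convergence passage to the limit producing the varifold.

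\emph{Stage 1: Galerkin scheme and the iterating operator.} We would take $\{w_k\}_{k\ge1}$ to be the $L^2(\Omega)$-orthonormal basis of divergence-free eigenfunctions of the Stokes operator on $\Omega$ (smooth up to $\partial\Omega$ since $\Omega$ is smooth), fix $N$, put $W_N=\mathrm{span}\{w_1,\dots,w_N\}$, and let $P_N$ be the $L^2$-projection onto $W_N$. For $v\in C([0,T];W_N)$ the field $v(t,\cdot)$ is smooth, divergence-free and vanishes on $\partial\Omega$, so its flow $\Phi^v$ is a $C^1$ family of volume-preserving diffeomorphisms of $\overline\Omega$ fixing $\partial\Omega$; set $\chi_v(t):=\chi_0\circ(\Phi^v_t)^{-1}$, the classical (hence renormalized) solution of $\partial_t\chi+v\cdot\nabla\chi=0$ with datum $\chi_0$, and $\Gamma_v(t):=\Phi^v_t(\partial\Omega_0^+)$, an embedded $C^2$ closed surface inside $\Omega$ with outer unit normal $n_v$. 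With $\chi_v,\Gamma_v$ frozen we would solve for $u_N,B_N\in C^1([0,T];W_N)$ the system ($k=1,\dots,N$)
\begin{align*}
&\tfrac{d}{dt}(u_N,w_k)+(u_N\cdot\nabla u_N,w_k)-((\nabla\times B_N)\times B_N,w_k)+2(\nu(\chi_v)Du_N,Dw_k)+\kappa\int_{\Gamma_v(t)}\mathrm{div}_{\Gamma_v}w_k\,d\mathcal H^2=0,\\
&\tfrac{d}{dt}(B_N,w_k)-(u_N\otimes B_N,\nabla w_k)+(B_N\otimes u_N,\nabla w_k)+\sigma(\nabla B_N,\nabla w_k)=0,
\end{align*}
with $u_N(0)=P_Nu_0$, $B_N(0)=P_NB_0$; for frozen $\chi_v,\Gamma_v$ this is an ODE with quadratic right-hand side, uniquely solvable on $[0,T]$ by the energy bound below. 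This defines the iterating operator $\mathcal F_N\colon C([0,T];W_N)\to C([0,T];W_N)$, $\mathcal F_N(v):=u_N$.

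\emph{Stage 2: the fixed point.} We would check that $\mathcal F_N$ is continuous (by stability of $v\mapsto\Phi^v$, hence of $\chi_v$ in $L^1$ and of $\Gamma_v,n_v$, together with continuous dependence of the ODE on its data) and compact (on a bounded set of $C([0,T];W_N)$ the ODE right-hand side is bounded, so the image is bounded in $C^1([0,T];W_N)\hookrightarrow\hookrightarrow C([0,T];W_N)$). To invoke Schaefer's (Leray--Schauder) form of the Schauder fixed-point theorem it then remains to bound $\{v:v=\lambda\mathcal F_N(v)\text{ for some }\lambda\in(0,1]\}$. For such a $v$ one has $v=\lambda u_N$ with $u_N=\mathcal F_N(v)$, so $\Gamma_v$ is transported by $\lambda u_N$ and $\kappa\int_{\Gamma_v(t)}\mathrm{div}_{\Gamma_v}u_N\,d\mathcal H^2=\kappa\lambda^{-1}\tfrac{d}{dt}\mathcal H^2(\Gamma_v(t))$; testing the two equations with $u_N$ and $B_N$, using $(u_N\cdot\nabla u_N,u_N)=0$ and the standard cancellation of the Lorentz and induction terms in the MHD energy identity,
\[
\tfrac12\tfrac{d}{dt}\bigl(\|u_N\|_{L^2}^2+\|B_N\|_{L^2}^2\bigr)+2\int_\Omega\nu(\chi_v)|Du_N|^2\,dx+\sigma\|\nabla B_N\|_{L^2}^2+\kappa\lambda^{-1}\tfrac{d}{dt}\mathcal H^2(\Gamma_v(t))=0,
\]
and, discarding the nonnegative terms and integrating, $\|u_N(t)\|_{L^2}^2+\|B_N(t)\|_{L^2}^2\le\|u_0\|_{L^2}^2+\|B_0\|_{L^2}^2+2\kappa\lambda^{-1}\mathcal H^2(\Gamma_0)$. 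Multiplying by $\lambda^2\le1$,
\[
\|v\|_{C([0,T];L^2)}^2=\lambda^2\|u_N\|_{C([0,T];L^2)}^2\le\lambda^2\bigl(\|u_0\|_{L^2}^2+\|B_0\|_{L^2}^2\bigr)+2\kappa\lambda\,\mathcal H^2(\Gamma_0)\le\|u_0\|_{L^2}^2+\|B_0\|_{L^2}^2+2\kappa\|\nabla\chi_0\|_{\mathcal M},
\]
a bound independent of $\lambda$ and of $N$ (the factor $\lambda$ cancels the $\lambda^{-1}$). Schaefer's theorem then yields a fixed point $u_N=\mathcal F_N(u_N)$, i.e.\ a solution $(u_N,B_N,\chi_N,\Gamma_N)$ of the fully coupled Galerkin problem with $\chi_N=\chi_{u_N}$, $\Gamma_N=\Gamma_{u_N}$. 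We would record the varifold $V_N(t)$ carried by $\Gamma_N(t)$, $\langle V_N(t),g\rangle=\int_{\Gamma_N(t)}g(x,n_N(t,x))\,d\mathcal H^2$, for which $\langle\delta V_N(t),\varphi\rangle=\int_{\Gamma_N(t)}\mathrm{div}_{\Gamma_N}\varphi\,d\mathcal H^2=\int_{\Omega\times\mathbb S^2}(I-s\otimes s):\nabla\varphi\,dV_N(t)$.

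\emph{Stage 3: uniform estimates and passage to the limit.} At the fixed point the interface is transported by $u_N$ itself, so the energy identity telescopes; since $\|P_Nu_0\|_{L^2}\le\|u_0\|_{L^2}$ (likewise for $B_0$) and $\|V_N(t)\|_{\mathcal M}=\mathcal H^2(\Gamma_N(t))=\|\nabla\chi_N(t)\|_{\mathcal M}$, we obtain, uniformly in $N$, the generalized energy inequality of Definition~\ref{def:vari solu}(5) for $(u_N,B_N,V_N)$ with right-hand side at most $E_0$; in particular $u_N,B_N$ are bounded in $L^\infty L^2\cap L^2H_0^1$, $\chi_N$ in $L^\infty BV$ (for $\kappa>0$), $V_N$ in $L^\infty\mathcal M$. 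The equations give time-derivative bounds — $\partial_t u_N,\partial_t B_N$ in $L^{5/3}$ of a negative-order space (using $u_N\otimes u_N,B_N\otimes B_N\in L^{5/3}(Q_T)$ via the $3$-D interpolation $L^\infty L^2\cap L^2L^6\subseteq L^{10/3}$, the curvature term contributing a first-order distribution) and $\partial_t\chi_N$ in $L^2H^{-1}$ from $\partial_t\chi_N=-\mathrm{div}(u_N\chi_N)$ — so Aubin--Lions gives, along a subsequence, $u_N\to u$, $B_N\to B$ strongly in $L^2(Q_T)$ (weakly in $L^2H^1_0$, weakly-$*$ in $L^\infty L^2$) and $\chi_N\to\chi$ a.e., hence strongly in every $L^p(Q_T)$, $p<\infty$ (as $\chi_N\in\{0,1\}$), while $V_N\rightharpoonup V$ weakly-$*$ in $L^\infty([0,T];\mathcal M(\Omega\times\mathbb S^2))$. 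Every term of the weak formulation then passes to the limit: the inertial and Lorentz terms, rewritten as $(u_N\otimes u_N,\nabla\varphi)$, $(B_N\otimes B_N,\nabla\varphi)$, by weak$\times$strong convergence (likewise the induction nonlinearities); the viscous term because $\sqrt{\nu(\chi_N)}\,Du_N\rightharpoonup\sqrt{\nu(\chi)}\,Du$ in $L^2$ (strong convergence of $\nu(\chi_N)$, weak convergence of $Du_N$); the surface-tension term because $\int_0^T\langle\delta V_N(t),\varphi(t)\rangle\,dt$ is linear in $V_N$ against a continuous, compactly supported integrand; and Definition~\ref{def:vari solu}(3) by passing to the limit in $\int s\cdot\psi\,dV_N(t)=-\int\psi\,d\nabla\chi_N(t)$. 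The limit $\chi$ is the unique renormalized solution of $\partial_t\chi+u\cdot\nabla\chi=0$ with datum $\chi_0$ (DiPerna--Lions, $u\in L^2H^1_0$, $\mathrm{div}\,u=0$; cf.\ Proposition~2.2 of \cite{Abels1}), and the energy inequality survives by weak and weak-$*$ lower semicontinuity of the $L^2$-norms, the varifold mass, and $\int_0^t\!\!\int_\Omega\nu(\chi_N)|Du_N|^2$. This would give a varifold solution on $[0,T]$.

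\emph{Expected main obstacle.} The hard part is the free interface. At the Galerkin level the trouble is that the surface-tension term is not controlled by the energy for an arbitrary iterate, so a naive self-map estimate fails; the remedy is the Schaefer setup above, in which the scaling factor $\lambda$ in $v=\lambda\mathcal F_N(v)$ exactly cancels the $\lambda^{-1}$ that appears when the interface is transported by $\lambda u_N$ rather than $u_N$ (and, pleasantly, the resulting bound is uniform in $N$). In the limit, only weak-$*$ convergence of the varifolds $V_N$ is available, so the curvature contribution must be kept in the varifold form $\int(I-s\otimes s):\nabla\varphi\,dV$, and condition~(3), the energy inequality, and all the a.e.-in-time statements must be obtained through lower semicontinuity and weak-continuity rather than by identifying pointwise limits; making the varifold $V$ and its first variation behave correctly under these limits is where most of the technical work will lie.
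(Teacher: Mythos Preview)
Your plan is sound and reaches the same conclusion as the paper, but by a genuinely different route at two key junctures.

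\textbf{Fixed point.} The paper keeps $B$ as an infinite-dimensional weak solution via a solution operator $B(u)$ (Lemma~3.1, taken from \cite{hw}), projects only $u$, and then applies the Schauder fixed-point theorem on a small ball $A_{T^*,R}\subset C([0,T^*];G_n)$; global existence on $[0,T]$ is obtained afterwards by an energy estimate plus a no-blow-up argument. You instead project both $u$ and $B$ onto $W_N$, freeze only $\chi_v,\Gamma_v$, and apply Schaefer's version of Leray--Schauder on the whole of $C([0,T];W_N)$. Your $\lambda$-cancellation (the interface transported by $v=\lambda u_N$ yields $\int_{\Gamma_v}\mathrm{div}_{\Gamma_v}u_N=\lambda^{-1}\tfrac{d}{dt}\mathcal H^2(\Gamma_v)$, and then $\|v\|^2=\lambda^2\|u_N\|^2$ absorbs the $\lambda^{-1}$) is correct and gives the a~priori bound in one shot, avoiding the short-time/extension dance. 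This is arguably cleaner; the price is that you must verify the MHD energy cancellation at the Galerkin level and check that the frozen ODE is globally solvable for \emph{every} $v$ (it is, by Gronwall with an $N$- and $v$-dependent constant), points the paper sidesteps by using the ready-made operator $B(\cdot)$.

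\textbf{Compactness of $u_N$.} The paper does not use Aubin--Lions for $u_n$; instead it proves $u_n\to u$ in $C([0,T];\mathbb V^*)$ via an equicontinuity/diagonal argument (Lemma~4.1), and handles the quadratic term $u_n\otimes u_n$ through the pairing with $\mathbb V$. Your Aubin--Lions route is also legitimate, but you should make explicit why the projected time derivative is bounded in a fixed dual space: since the $w_k$ are Stokes eigenfunctions, $P_N$ is an orthogonal projection in every $D(A^{s/2})$, hence uniformly bounded on $H^s_\sigma$ and, by duality, on $(H^s_\sigma)^*$; with $s>5/2$ the curvature term (a first-order distribution supported on $\Gamma_N$) and the quadratic terms land there, so $\partial_t u_N$ is bounded in $L^{p}([0,T];(H^s_\sigma)^*)$ for some $p>1$ and Aubin--Lions applies with $H^1_\sigma\hookrightarrow\hookrightarrow L^2_\sigma\hookrightarrow (H^s_\sigma)^*$. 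Stating this explicitly closes the only point where your sketch is a bit telegraphic.

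The remaining steps (varifold limit, condition~(3), renormalized transport, lower semicontinuity of $\int\nu(\chi_N)|Du_N|^2$) match the paper's Section~4 essentially verbatim.
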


The proof of Theorem \ref{main theorem} will be in the spirit of \cite{Abels1}
with some new ideas to deal with the bounded domain $\Omega$ and the magnetic field $B$.
We shall use the Galerkin method to construct the approximate solutions in a bounded domain.
Due to the extra term $B$ in the equations, we cannot use the method 
of monotone operators
in \cite{Abels1, Zeidler2A, Zeidler2B} to solve the approximate equations.
Instead, we will construct iterating operators and 
solve the equations by finding the fixed-points of the operators.
In fact, if our velocity $u$ is from certain function spaces,  
then the quantity $B$ and $\chi$ are uniquely decided by $u$.
Thus, there exist solution operators that map each $u$ 
to $B(u)$ and $\chi(u)$. These operators have some good properties of continuity and boundedness, which will contribute to showing the compactness of the iterating operator;
see \cite{Abels1} and \cite{hw} for more details. 
Due to the free interface $\Gamma(t)$, it is hard to prove 
the Lipschitz continuity of
the iterating operators.
Thus, we cannot use the classical contraction mapping theorem 
to prove the existence of the fixed-points.
In order to overcome this difficulty, we firstly prove the 
compactness of the iterating operators and then
use the Schauder fixed-point theorem.

The rest of this paper is organized as follows.
We firstly list some useful background knowledge in Section \ref{prelim}.
In Section \ref{Galerkin}, we will study the Galerkin approximate equations on $[0,T]$
and prove that the approximate solutions exists globally on $[0,T]$. 
Then we give a uniform energy estimate for all the approximate solutions.
Finally, we will study the limits of the approximate solutions 
in Section \ref{pass limit}.

\bigskip

\section{Preliminary}   \label{prelim}

\subsection{Function spaces}
We recall some definitions of function spaces.  
Given a bounded domain  $\Omega\subseteq \mathbb{R}^d$.
The space $C^k(\Omega)$ denotes the functions with continuous partial derivatives until order $k$.
The subspace $C_b^k(\Omega)\subseteq C^k(\Omega)$ consists of bounded functions with bounded derivatives up to order $k$.
The space $C^k(\overline{\Omega})$ is the subspace of $C^k(\Omega)$, 
such that for each 
$f\in C^k(\overline{\Omega})$, we can find $F\in C^k(\mathbb{R}^d)$
with $f=F$ on $\overline{\Omega}$.
The space $C_{c,\sigma}^{\infty}(\Omega)$ consists of functions in $C_{c}^{\infty}(\Omega)$  which are divergence-free.
The following result will be useful.
\begin{Proposition}[\cite{Zeidler1}, Appendix (24d)] 
\label{compact embedding Ck space}
For a compact set $K\subseteq\mathbb{R}^d$, 
for any $k\in\mathbb{Z}$ and $k\geq 0$,
we have the compact embedding
$C^{k+1}(K)\hookrightarrow \hookrightarrow C^{k}(K) $.
\end{Proposition}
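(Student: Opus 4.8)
The plan is to obtain the varifold solution as a weak limit of Galerkin approximations, following the three-stage architecture announced in the introduction: construct finite-dimensional approximate solutions by a fixed-point argument, derive uniform energy estimates, and then pass to the limit. First I would fix an orthonormal basis $\{w_k\}_{k\ge 1}$ of the space of divergence-free vector fields in $H_0^1(\Omega)$---for concreteness the eigenfunctions of the Stokes operator, which are smooth up to $\partial\Omega$ since $\Omega$ is smooth---and seek approximate fields $u_n(t)=\sum_{k=1}^n c_k(t)w_k$ and $B_n(t)=\sum_{k=1}^n d_k(t)w_k$. Projecting \eqref{eq:s,u}--\eqref{eq:s,B} onto $\mathrm{span}\{w_1,\dots,w_n\}$ turns the bulk equations into a coupled system; the coupling to the interface enters through the viscosity $\nu(\chi)$ and the surface-tension functional, so the system is not a pure ODE and must be closed differently.

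Second, I would realize the closure through solution operators. Given a candidate velocity $u$ in a closed ball of $C([0,T];L^2(\Omega))$ of divergence-free fields, the magnetic equation \eqref{eq:s,B} is linear in $B$, so standard parabolic theory produces a unique $B=B(u)$ in $L^2([0,T];H_0^1)\cap L^\infty([0,T];L^2)$; and the transport equation in condition (4) of Definition~\ref{def:vari solu} is solved, since $\mathrm{div}\,u=0$, by the DiPerna--Lions renormalized-solution theory, giving a unique $\chi=\chi(u)\in L^\infty([0,T];BV(\Omega;\{0,1\}))$ (cf. Proposition 2.2 in \cite{Abels1}). Substituting $B(u)$ and $\chi(u)$ into the projected momentum equation defines an iterating operator $\mathcal{F}:u\mapsto u_n$ whose fixed point is a Galerkin solution. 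Because the dependence $u\mapsto\chi(u)$ is governed by a free interface, it cannot be expected to be Lipschitz, so I would not attempt a contraction argument; instead I would show that $\mathcal{F}$ maps a closed bounded convex set into itself, is continuous, and has relatively compact image---using the regularizing properties of the solution operators together with the compact embedding of Proposition~\ref{compact embedding Ck space} on the finite-dimensional Galerkin space---and then invoke the Schauder fixed-point theorem to produce $(u_n,B_n,\chi_n)$ on all of $[0,T]$.

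Third, I would test the momentum projection with $u_n$ and the magnetic projection with $B_n$ and add them; the Lorentz force and the induction term cancel in the standard MHD fashion, while the surface-tension contribution is, up to sign, the time derivative of the perimeter $\|\nabla\chi_n\|_{\mathcal{M}}$ transported by $u_n$. This yields the generalized energy inequality (5) and hence bounds for $u_n,B_n$ in $L^\infty([0,T];L^2)\cap L^2([0,T];H_0^1)$, for $\chi_n$ in $L^\infty([0,T];BV)$, and for the associated varifolds $V_n$ (built from the reduced boundary of $\{\chi_n(t)=1\}$) in $L^\infty([0,T];\mathcal{M}(\Omega\times\mathbb{S}^2))$, all uniform in $n$. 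From these bounds and estimates on $\partial_t u_n,\partial_t B_n$ in a negative-order space, the Aubin--Lions lemma gives strong convergence $u_n\to u$ and $B_n\to B$ in $L^2(Q_T)$; $BV$-compactness together with the transport structure gives $\chi_n\to\chi$ in $L^1(Q_T)$ and a.e., whence $\nu(\chi_n)\to\nu(\chi)$ strongly in every $L^p(Q_T)$ with $p<\infty$; and weak-$*$ compactness of measures gives $V_n\overset{*}{\rightharpoonup}V$.

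Finally, I would pass to the limit in each term. Strong $L^2$ convergence of $u_n$ and $B_n$ handles the quadratic terms $u\otimes u$, $B\otimes B$, $u\otimes B$ and $B\otimes u$; the a.e. convergence of $\nu(\chi_n)$ together with the weak convergence of $Du_n$ handles the viscous term; and the weak-$*$ limit of $V_n$ supplies the limiting varifold and its first variation, giving the surface-tension integral in \eqref{def vari solu main}. Verifying conditions (1)--(4) is then a matter of collecting these limits, while (5) follows from weak and weak-$*$ lower semicontinuity of the norms. The hard part will be the surface-tension term: the curvature carries \emph{no} compactness, so the normals recorded in $V_n$ need not collapse onto $\nabla\chi/|\nabla\chi|$ in the limit, which is precisely why one obtains a varifold (rather than a weak) solution and why condition (3) must be posed as the measure identity relating $V(t)$ to $\nabla\chi(t)$ rather than as an equality of curvatures. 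Establishing this compatibility and the correct energy inequality in the presence of these possibly oscillating normals, in a bounded domain where the boundary mollification of \cite{Abels1,Yere} is unavailable, is the core technical obstacle of the proof.
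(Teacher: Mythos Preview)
Your proposal does not address the stated proposition at all. Proposition~\ref{compact embedding Ck space} is the classical compact embedding $C^{k+1}(K)\hookrightarrow\hookrightarrow C^{k}(K)$ for a compact set $K\subseteq\mathbb{R}^d$; it is a preliminary result quoted from \cite{Zeidler1} and the paper gives no proof of it. What you have written is an outline of the proof of Theorem~\ref{main theorem}, the existence of varifold solutions to the two-phase MHD system. These are entirely different statements.

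If you wish to prove Proposition~\ref{compact embedding Ck space} itself, the argument is elementary and has nothing to do with Galerkin approximations, solution operators, or varifolds: a bounded set in $C^{k+1}(K)$ has uniformly bounded $(k{+}1)$-th derivatives, so by the mean value theorem the $k$-th derivatives are equicontinuous; together with uniform boundedness of all derivatives up to order $k$, the Arzel\`a--Ascoli theorem (cf.\ Proposition~\ref{Zeidler1 Appendix 24g}) yields precompactness in $C^{k}(K)$. If instead you intended to sketch the proof of Theorem~\ref{main theorem}, you should say so explicitly, and in that case your outline is broadly consistent with the paper's strategy, though some details differ (for instance, the paper does \emph{not} obtain strong $L^2$ convergence of $u_n$ via Aubin--Lions, but rather proves $u_n\to u$ in $C([0,T];\mathbb{V}^*)$ via Lemma~\ref{C0Vdual.covergence} and handles the convective term using this weaker convergence).
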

For a Banach space $X$ and $1\leq p\leq \infty $, the Bochner space
$L^p([0,T];X)$ is the space of functions $u(t)$ from $[0,T]$ to $X$
such that
$$
\left\| u \right\|_{L^p ([0,T];X)}
:=
\left( \int_0^T\left\| u(t)   \right\|_X^p  {d}t \right)^{\frac{1}{p}} < \infty  
\text{ \  \  for \ }  1\leq p <\infty \text{, or}
$$
$$
\left\|  u  \right\|_{L^{\infty} ([0,T];X)}
:=
\esssup_{t\in[0,T]} \left\| u(t)   \right\|_X < \infty
\text{ \  \  for \ } p =\infty.
$$
The space
$C([0,T];X)$ consists of functions $u(t)$ from $[0,T]$ to $X$
such that for any $t_0\in[0,T]$
$$
\lim_{t\to t_0} \left\| u(t)-u(t_0)   \right\|_X =0. 
$$
For more details on function spaces, we refer to \cite{Novotny}.
For Sobolev spaces and the embedding theorems, we refer to \cite{Evans1, Leoni}.

For a locally compact separable metric space $X$, we denote the space of finite Radon measures on $X$ by $\mathcal{M}(X)$. We recall that $\mathcal{M}(X)= (C_0(X))^*$.

We denote by $L_w^{\infty}([0,T],\mathcal{M}(X))$ the space of 
functions $f:[0,T]\to \mathcal{M}(X)$ such that: 
for all $\varphi \in L^1([0,T],C_0(X))$ the duality
$(f,\varphi)(t)$ is measurable on $[0,T]$;
$\left\| f \right\|_{\mathcal{M}(X)}(t)$ is measurable on $[0,T]$;
and $\esssup_{[0,T]} \left\| f \right\|_{\mathcal{M}(X)}(t) < \infty$.
More details can be found in \cite{Abels1,Yere}.

\subsection{Sets of finite perimeter}

In order to study the free interface, we recall some topics about the sets with less regular boundaries. 
Most of the topics here can be found in the geometric measure theory. 
We refer to \cite{Amb.BV, Maggi} for interested readers.
\begin{Definition}[\cite{Amb.BV}, Definition 3.1]
For an open set $\Omega\subseteq\mathbb{R}^d$. A function $u\in L^{1}(\Omega)$ is of bounded variation in $\Omega$
if there exist
finite Radon measures $\lambda_1,\cdots \lambda_d \in\mathcal{M}(\Omega)$ such that
\[
\int_{\Omega}u\frac{\partial\varphi}{\partial x_{i}}{d}x=-\int_{\Omega}\varphi{d}\lambda_i
\]
holds
for all $\varphi\in C_{c}^{\infty}(\Omega)$ and $i=1,\cdots,d$.
The space ${BV}(\Omega)$ consists of functions of bounded variation in $\Omega$.
\end{Definition}

The variations of functions are useful when we study the space ${BV}(\Omega)$.
\begin{Definition}[\cite{Amb.BV}, Definition 3.4]
Given a function $u\in L_{loc}^{1}(\Omega)^{m}$ with $\Omega\subseteq\mathbb{R}^d$.
Its variation in $\Omega$ is defined as
\[
\mathcal{V}(u,\Omega):=\sup\left\{\sum_{\alpha=1}^{m}\int_{\Omega}u^{\alpha} {\rm div} \varphi^{\alpha}{d}x:\varphi\in C_{c}^{1}(\Omega)^{md},\,\,\left\| \varphi\right\| _{L^{\infty}(\Omega)}\leq1\right\}.
\]
\end{Definition}
 
When studying the boundary of a set $E$, 
we usually consider its indicator function $\chi_E$. 
The perimeter of $E$ is defined using the variation of $\chi_E$.
See \cite{Amb.BV, Evans3} for details.
\begin{Definition}[\cite{Amb.BV}, Definition 3.35]
Let $\Omega\subseteq\mathbb{R}^{d}$ be an open set and
$E\subseteq \mathbb{R}^{d}$ a Lebesgue measureable set. 
The perimeter of $E$
in $\Omega$ is defined as:
\[
\mathcal{P}(E,\Omega):=\sup\left\{\int_{E} {\rm div} \varphi{d}x:\varphi\in C_{c}^{1}(\Omega)^{d},\,\,\left\| \varphi\right\| _{L^{\infty}(\Omega)}\leq1\right\}.
\]
The set $E$ is of finite perimeter in $\Omega$ if $\mathcal{P}(E,\Omega)<\infty$.
\end{Definition}

\begin{Remark}
By this definition, only $\partial E\cap \Omega$ will be counted
into the perimeter of $E$.
\end{Remark}

\subsection{Mean curvature functional} \label{prelim mean curv}
When calculating the weak form of \eqref{eq:s,u},
we will get a so-called mean curvature functional.
This functional is dependent on the interface $\Gamma(t)$, and thus will be denoted by 
$H_{\Gamma(t)}$ or $H_{\chi(t)}$.
When everything is smooth enough, we have the following formula:
\begin{align*}
\left\langle H_{\Gamma(t)},\varphi(t)\right\rangle :=\int_{\Gamma(t)}Hn\cdot\varphi{d}\mathcal{H}^{d-1}(x),
\end{align*}
where $\mathcal{H}^k$ denotes the $k$-dimensional Hausdorff measure.
On the interface $\Gamma$  
the tangential divergence of a function $\varphi\in C^{1}(\Gamma)^{d}$
is defined as
$$
{\rm div}^{\Gamma}\varphi:={\rm div}\varphi- n\otimes n:\nabla\varphi
= (I-n\otimes n):\nabla\varphi,
$$
where $n(x)$ denotes the normal vector. 
Note that 
$P_{\tau}:=I-n\otimes n$ is the orthogonal projection onto the tangent space,
which is defined pointwisely on $\Gamma$.
The mean curvature is defined as (see \cite{Amb.BV, Maggi, Simon})
$$
H:=-{\rm div}^{\Gamma}\varphi
$$
and the mean curvature vector is $\bm{H}:=Hn$.
According to the generalized divergence theorem (see \cite{Maggi}),
\begin{equation}\label{generalized divergence theorem}
\int_{\Gamma}{\rm div}^{\Gamma}\varphi{d}\mathcal{H}^{d-1}
=
\int_{\Gamma}Hn\cdot\varphi{d}\mathcal{H}^{d-1}
+
\int_{\partial\Gamma}\varphi\cdot n_{\partial\Gamma} {d}\mathcal{H}^{d-2}
\end{equation}
holds for all $\varphi\in C_c^1(\mathbb{R}^d)^d$.
In our problem, the set  $\overline{\Omega^+ (t)}$ will always stay in 
the interior of $\Omega$. Thus, $\Gamma(t)$ will be a closed surface 
and will not have any edge, i.e. the $(d-2)$-dimensional boundary.
As a result, the second term on the right hand side in \eqref{generalized divergence theorem} vanishes.
Now when the surface $\Gamma$ becomes less regular, 
as long as a measure theoretical normal vector exists,
we can use the generalized divergence theorem to 
replace $Hn\cdot\varphi$ with ${\rm div}^{\Gamma}\varphi$,
and study the generalized mean curvature functional.

In order to study the convergence of the mean curvature terms, 
we modify Lemma 2.4 from \cite{Abels1} and obtain the following lemma.
\begin{Lemma}[\cite{Abels1}, Lemma 2.4, revised]
Let $\Omega$ be a bounded, simply connected, smooth domain.
Let $\Omega_{0}^{+}$ be a bounded, simply connected $C^2$-domain such that 
$\overline{\Omega_{0}^{+}}\subseteq \Omega$.
Suppose that $u$, $v\in C([0,T];C_{b}^{2}(\Omega))$ with $ {\rm div} u= {\rm div} v=0$
and $u\to v$ in $C([0,T];C^{1}(\Omega))$.
Then
\begin{equation}
\int_{\Gamma_{u}(t)}f(x,n_{x}){d}\mathcal{H}^{d-1}(x)
\to
\int_{\Gamma_{v}(t)}f(x,n_{x}){d}\mathcal{H}^{d-1}(x)
\end{equation}
uniformly on $[0,T]$.
Here $\Gamma_{u}(t)$ and $\Gamma_{v}(t)$ 
are interfaces obtained from $u$ and $v$.
\end{Lemma}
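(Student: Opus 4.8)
\textit{Proof idea.} The plan is to write each interface $\Gamma_u(t)$ as the image of the fixed initial interface $\Gamma_0:=\partial\Omega_0^+$ under the Lagrangian flow of $u$, and then to reduce the assertion to the uniform convergence of these flow maps together with their first spatial derivatives. Let $X_u$ solve $\partial_t X_u(t,y)=u(t,X_u(t,y))$, $X_u(0,y)=y$, so that $\Gamma_u(t)=X_u(t,\Gamma_0)$ and, analogously, $\Gamma_v(t)=X_v(t,\Gamma_0)$; recall that by construction the transported domain $X_u(t,\Omega_0^+)$ stays in a fixed compact set $K\subset\Omega$ for $t\in[0,T]$, so it suffices to work on $K$ and a fixed neighborhood of it. Since $u\in C([0,T];C_b^2(\Omega))$, standard ODE theory gives $X_u(t,\cdot)\in C^2$ in space, with the Jacobian $M_u:=\nabla_y X_u$ solving the linear variational system $\partial_t M_u=(\nabla u)(t,X_u)\,M_u$, $M_u(0,\cdot)=I$; moreover $\mathrm{div}\,u=0$ gives $\det M_u\equiv1$ by Liouville's formula, so $X_u(t,\cdot)$ is a $C^2$-diffeomorphism onto its image and, in particular, restricts to an embedding of $\Gamma_0$.

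First I would prove that $u\to v$ in $C([0,T];C^1)$ forces $X_u\to X_v$ in $C([0,T];C^1)$ on a neighborhood of $\Gamma_0$. Splitting $\partial_t(X_u-X_v)=\big[u(t,X_u)-u(t,X_v)\big]+\big[(u-v)(t,X_v)\big]$ and using the uniform bound $\|\nabla u\|_{L^\infty}\le C$, Gronwall's inequality yields $\sup_{[0,T]}\|X_u-X_v\|_{C^0}\le C_T\,\|u-v\|_{C([0,T];C^0)}$. Subtracting the two variational systems and estimating $(\nabla u)(t,X_u)M_u-(\nabla v)(t,X_v)M_v$ by adding and subtracting $(\nabla u)(t,X_v)M_v$ — where the uniform continuity of $\nabla u$ on $K$, a consequence of $u\in C_b^2$, controls $(\nabla u)(t,X_u)-(\nabla u)(t,X_v)$ in terms of $|X_u-X_v|$ — Gronwall again gives $\sup_{[0,T]}\|M_u-M_v\|_{C^0}\le C_T\big(\|u-v\|_{C([0,T];C^1)}+\|X_u-X_v\|_{C([0,T];C^0)}\big)$. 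Hence $X_u\to X_v$ and $\nabla_y X_u\to\nabla_y X_v$ uniformly on $[0,T]$.

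To conclude, fix a finite $C^2$-atlas $\{\psi_j:V_j\to\Gamma_0\}_j$ of the compact hypersurface $\Gamma_0$ with a subordinate partition of unity $\{\rho_j\}$. By the area formula,
$$
\int_{\Gamma_u(t)}f(x,n_x)\,d\mathcal H^{d-1}(x)=\sum_j\int_{V_j}\rho_j(\psi_j(s))\,f\big(X_u(t,\psi_j(s)),\,n_u(t,s)\big)\,g_u(t,s)\,ds,
$$
where $A_u(t,s):=M_u(t,\psi_j(s))\,D\psi_j(s)\in\mathbb R^{d\times(d-1)}$, $g_u(t,s):=\sqrt{\det\big(A_u(t,s)^{T}A_u(t,s)\big)}$, and $n_u(t,s)$ is the unit normal to $\Gamma_u(t)$ at $X_u(t,\psi_j(s))$, consistently oriented through the diffeomorphism. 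Both $g_u$ and $n_u$ are explicit continuous functions of the matrix $A_u$ on the open set of rank-$(d-1)$ matrices; since $M_u$ is invertible and $D\psi_j$ has full rank, $A_u$ stays there, and compactness of $[0,T]\times\overline{V_j}$ together with $\det M_u\equiv1$ gives a uniform positive lower bound for $g_u$, so this dependence is uniformly continuous. Combining the uniform convergence $A_u\to A_v$ from the previous step with the continuity and boundedness of $f$ and the finiteness of $|V_j|$, the integrand converges uniformly in $(t,s)$; summing over $j$ gives the claimed convergence, uniformly in $t\in[0,T]$.

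The main obstacle is the middle step — upgrading $C^1$-convergence of the velocity fields to $C^1$-convergence of the flow maps — because the normal $n_x$ and the surface element are first-order quantities, so mere $C^0$-convergence of the flows would not suffice; this is precisely why the hypothesis is stated in $C([0,T];C^1)$ and why $C_b^2$-regularity of $u,v$ is needed, as it makes $\nabla u,\nabla v$ uniformly continuous and thereby closes the Gronwall estimate for $M_u-M_v$. A secondary technical point is the non-degeneracy of the induced area elements $g_u$, handled above via incompressibility and compactness; once all estimates are taken in the $\sup_{[0,T]}$ norm, the uniformity in $t$ is automatic.
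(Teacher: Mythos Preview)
Your proposal is correct and follows essentially the same approach as the paper: both argue by showing that $u\to v$ in $C([0,T];C^1)$ implies $X_u\to X_v$ in $C([0,T];C^1)$ via Gronwall-type estimates, and then pull back the surface integral to the fixed compact hypersurface $\Gamma_0$ through a finite local parameterization. The paper merely sketches this strategy and defers the details to the argument in \cite{Abels1}; your proposal supplies exactly those details, including the variational equation for $\nabla_y X_u$ and the explicit dependence of the normal and area element on the Jacobian.
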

The details on how the velocity determines the interface 
are stated in Section \ref{solu operators}.
The convergence of the flow mappings is still valid 
when the domain $\mathbb{R}^d$ in \cite{Abels1}
is replaced by a bounded domain $\Omega$.
Thus, using the same argument as in \cite{Abels1},
we can apply a local parameterization to 
$\Gamma_0=\partial\Omega_0^+$
to prove the lemma.

\subsection{Varifolds} \label{prelim varifold}
For the problem we study, even the measure theoretical normal vectors might not be guaranteed to exist all the time.
In this case, we have to use varifolds to describe the surfaces.
We refer to the definition in \cite{Abels1} 
and give an analogue one for the case of the bounded domain $\Omega$.
A measure $V$ is called a general $(d-1)$-varifold
if it is a finite Radon measure on $\Omega\times\mathbb{S}^{d-1}$,
i.e. $V\in\mathcal{M}(\Omega\times\mathbb{S}^{d-1})$. 
The varifold $V$ can be understood as assigning different weight to vectors in 
$\Omega$ and $\mathbb{S}^{d-1}$.
In another word, it tells the possibility of a point to be on the interface 
and the possibility of a vector to be the normal vector.
For interested readers we refer to \cite{Morgan, Simon, Malek}. 


The first variation of $V$ is defined as
$$
\langle\delta V, \varphi\rangle
:=
\int_{\Omega\times\mathbb{S}^{d-1}} (I-s\times s):\nabla\varphi{d}V(x,s)
$$
for any $\varphi\in C_0^1(\Omega)$; see \cite{Simon} chapter 8 or \cite{Allard}.
This allows us to replace the functional $H_{\chi_E (t)}$ with $-\delta V(t)$,
and study the mean curvature functional when the interface is less regular.

\subsection{Compact operators} 
We recall some theory about compact operators;
see \cite{Zeidler1} for details. 
The compact operators will be used to solve the Galerkin approximate equations.
\begin{Definition}[\cite{Zeidler1}, Definition 2.9]
\label{def compact operator} 
Given two Banach spaces $X$ and $Y$. An operator $T: D(T)\subset X\to Y$
is called a compact operator if
 it is continuous 
and maps bounded sets into precompact sets.
\end{Definition}
The following proposition is important when proving the compactness of an operator.
\begin{Proposition}[\cite{Zeidler1}, Appendix (24g)] 
\label{Zeidler1 Appendix 24g}
The set $M\subseteq C(\overline{\Omega})$ is precompact if and only if \\
(1)
$\sup_{f\in M}\sup_{x\in\overline{\Omega}}|f(x)|<\infty$. \\
(2)  For every $\varepsilon>0$, there exists $\delta>0$,
such that 
 $\sup_{f\in M}|f(x)-f(y)|<\varepsilon$
for every $x,y\in\overline{\Omega}$
and $|x-y|<\delta$. 
\end{Proposition}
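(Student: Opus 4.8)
The plan is to recast precompactness in the Banach space $C(\overline{\Omega})$, equipped with the supremum norm, as sequential precompactness. Since $\overline{\Omega}$ is compact, $C(\overline{\Omega})$ is complete, and in a complete metric space a subset has compact closure if and only if every sequence drawn from it admits a convergent subsequence. I would then prove the two implications of the equivalence separately.

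For necessity, suppose $M$ is precompact. Its closure is then compact, hence bounded, which gives (1) at once. For (2), fix $\varepsilon>0$ and use total boundedness of $M$ to select a finite $\tfrac{\varepsilon}{3}$-net $f_1,\dots,f_N\in M$. Each $f_i$ is continuous on the compact set $\overline{\Omega}$, hence uniformly continuous by the Heine--Cantor theorem, so there is $\delta_i>0$ with $|f_i(x)-f_i(y)|<\tfrac{\varepsilon}{3}$ whenever $|x-y|<\delta_i$. Setting $\delta:=\min_i\delta_i$, for an arbitrary $f\in M$ I pick $f_i$ with $\|f-f_i\|_{\infty}<\tfrac{\varepsilon}{3}$ and split
\[
|f(x)-f(y)|\le |f(x)-f_i(x)|+|f_i(x)-f_i(y)|+|f_i(y)-f(y)|<\varepsilon
\]
whenever $|x-y|<\delta$. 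This bound is uniform in $f\in M$, which is exactly (2).

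For sufficiency, assume (1) and (2) and extract a uniformly convergent subsequence from an arbitrary $(f_n)\subseteq M$ by a diagonal argument. Since $\overline{\Omega}$ is a compact metric space it is separable; fix a countable dense set $\{x_k\}_{k\ge 1}$. By (1) the scalars $(f_n(x_1))_n$ are bounded, so a subsequence converges; passing successively to further subsequences along $x_2,x_3,\dots$ and taking the diagonal yields a subsequence $(g_j)$ for which $g_j(x_k)$ converges for every $k$. It remains to upgrade convergence on the dense set to uniform convergence, and this is where equicontinuity does the essential work: given $\varepsilon>0$, pick $\delta$ from (2), cover $\overline{\Omega}$ by finitely many balls of radius $\delta$ centred at points $x_{k_1},\dots,x_{k_m}$ of the dense set, and choose $J$ so large that $|g_j(x_{k_i})-g_l(x_{k_i})|<\varepsilon$ for all $i$ and all $j,l\ge J$. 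For any $x\in\overline{\Omega}$ there is a centre $x_{k_i}$ with $|x-x_{k_i}|<\delta$, whence
\[
|g_j(x)-g_l(x)|\le |g_j(x)-g_j(x_{k_i})|+|g_j(x_{k_i})-g_l(x_{k_i})|+|g_l(x_{k_i})-g_l(x)|<3\varepsilon
\]
for all $j,l\ge J$. Thus $(g_j)$ is uniformly Cauchy and, by completeness of $C(\overline{\Omega})$, converges in the supremum norm, so $M$ is precompact.

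I expect the main obstacle to be the sufficiency direction, and within it the passage from convergence on the countable dense set to uniform convergence. The diagonal extraction is routine bookkeeping, but the uniform Cauchy estimate is the crux: it is precisely the equicontinuity hypothesis (2), combined with the finite subcover furnished by compactness of $\overline{\Omega}$, that makes the three-term splitting above uniform in $x$ and thereby converts pointwise control into control in the norm of $C(\overline{\Omega})$.
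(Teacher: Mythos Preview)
Your proof is correct and is the standard argument for the Arzel\`a--Ascoli theorem. Note, however, that the paper does not supply its own proof of this proposition: it is stated as a citation from Zeidler's book (Appendix (24g)) and used as a black box, so there is no in-paper argument to compare against. Your write-up would serve perfectly well as a self-contained justification should one be desired.
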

We give a specific version of the Arzela-Ascoli theorem.
\begin{Theorem}[\cite{Zeidler1}, Appendix (24i)]\label{Arzela Ascoli}
Let $X$ be a Banach space.
The set $A\subseteq C([0,T];X)$ is precompact if and only if \\
(1) For all $t\in[0,T]$, the set $\{f(t):f\in A\}$ is precompact in $X$.\\
(2) For all $t\in[0,T]$ and $\varepsilon>0$, there exists $\delta>0$,
such that $\sup_{f\in A}\left\| f(t)-f(s)\right\| _{X}<\varepsilon$
for all $s\in[0,T]$ and $|t-s|<\delta$. 
\end{Theorem}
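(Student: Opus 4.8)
The plan is to prove both implications through the notion of total boundedness, relying on the fact that $C([0,T];X)$ is a complete metric space under the sup-norm $\norm{f}_{C([0,T];X)}=\sup_{t\in[0,T]}\norm{f(t)}_X$ (completeness of $X$ as a Banach space gives completeness here). Consequently precompactness of $A$ is equivalent to $A$ being totally bounded, and it suffices to produce finite $\varepsilon$-nets.

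For the necessity of (1) and (2), suppose $A$ is precompact. For (1) I would observe that for each fixed $t$ the evaluation map $e_t\colon C([0,T];X)\to X$, $e_t(f)=f(t)$, is $1$-Lipschitz, since $\norm{f(t)}_X\le\norm{f}_{C([0,T];X)}$; because continuous images of precompact sets are precompact, $\{f(t):f\in A\}=e_t(A)$ is precompact in $X$. For (2), total boundedness supplies a finite $\varepsilon/3$-net $f_1,\dots,f_n\in A$, each of which is uniformly continuous on the compact interval $[0,T]$; choosing a common $\delta$ that controls the oscillation of all the $f_i$ and splitting $\norm{f(t)-f(s)}_X$ through the nearest net element via the standard three-term estimate yields the stated modulus, uniform over $f\in A$.

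The substantial direction is sufficiency. Assume (1) and (2). The first step is to promote the pointwise modulus of (2) to a uniform one: since $A$ is equicontinuous at every point of $[0,T]$ with a modulus uniform in $f\in A$, a finite-subcover argument on the compact interval produces a single $\delta>0$ such that $\abs{t-s}<\delta$ forces $\sup_{f\in A}\norm{f(t)-f(s)}_X<\varepsilon/3$. Next I would fix nodes $0=s_0<s_1<\cdots<s_m=T$ with consecutive gaps smaller than $\delta$, so that every $t\in[0,T]$ lies within $\delta$ of some $s_j$. By (1) each value set $\{f(s_j):f\in A\}$ is totally bounded, hence so is the joint evaluation $\Phi(A)$ with $\Phi(f):=(f(s_0),\dots,f(s_m))$ inside the product $X^{m+1}$ equipped with the max-norm; from this I extract a finite $\varepsilon/3$-net $\Phi(f_1),\dots,\Phi(f_N)$ with $f_k\in A$. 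Finally, gluing these ingredients with the classical three-$\varepsilon/3$ argument: given $f\in A$, take the matching $f_k$, and for any $t$ choose $s_j$ with $\abs{t-s_j}<\delta$; then $\norm{f(t)-f_k(t)}_X$ is bounded by the oscillation of $f$, the node discrepancy at $s_j$, and the oscillation of $f_k$, each below $\varepsilon/3$, so $\norm{f-f_k}_{C([0,T];X)}\le\varepsilon$. Thus $\{f_1,\dots,f_N\}$ is a finite $\varepsilon$-net for $A$, and since $\varepsilon$ is arbitrary, $A$ is totally bounded and therefore precompact.

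The hard part is the sufficiency direction, and within it the two coupling steps rather than any single estimate. The first is upgrading the pointwise equicontinuity of (2) to a uniform modulus using compactness of $[0,T]$; the second, and genuinely essential, step is selecting one finite net that simultaneously approximates every $f\in A$ at all the nodes $s_j$ at once, which is exactly where the total boundedness of the product of the value sets coming from (1) is used. Once a uniform $\delta$ and a joint node-net are secured, the remaining three-term estimate is routine.
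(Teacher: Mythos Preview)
Your argument is correct and follows the classical Arzel\`a--Ascoli strategy: necessity via Lipschitz evaluation maps and a finite net, sufficiency via upgrading pointwise equicontinuity to a uniform modulus on the compact interval, sampling at finitely many nodes, and a three-$\varepsilon/3$ gluing. There is nothing to fault.

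However, there is no comparison to make: the paper does not prove this statement. Theorem~\ref{Arzela Ascoli} is simply quoted from \cite{Zeidler1}, Appendix~(24i), as a preliminary tool and carries no proof in the text. Your write-up supplies a self-contained proof where the paper is content to cite the literature.
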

At last, we recall the Schauder fixed-point theorem of compact operators.
\begin{Theorem}[\cite{Zeidler1}, Theorem 2.A]\label{fixed point thm}
Let  $X$ be a Banach space. Suppose $A\subseteq X$ is nonempty,
bounded, closed and convex. Given a compact operator $T:A\to A$. 
There exists  a  fixed point of $T$ in $A$. 
\end{Theorem}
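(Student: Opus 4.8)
The plan is to reduce this infinite-dimensional statement to the finite-dimensional Brouwer fixed-point theorem by approximating $T$ with maps whose ranges lie in finite-dimensional subspaces. The starting point is the compactness of $T$: by Definition \ref{def compact operator}, since $A$ is bounded, $T(A)$ is precompact, so its closure $K := \overline{T(A)}$ is a compact subset of $X$. Because $T$ maps $A$ into $A$ and $A$ is closed, we have $K \subseteq A$; moreover $A$ is convex, so the convex hull of any finite collection of points of $K$ again lies in $A$. This last observation is what will allow the finite-dimensional reduction to stay inside $A$.

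First I would construct, for each $\varepsilon > 0$, a finite-dimensional approximation. Using compactness of $K$, choose a finite $\varepsilon$-net $\{y_1,\dots,y_n\} \subseteq K$, so that every $y \in K$ lies within distance $\varepsilon$ of some $y_i$. Define the Schauder projection $P_\varepsilon : K \to \mathrm{conv}\{y_1,\dots,y_n\}$ by the partition-of-unity formula
\[
P_\varepsilon(y) := \frac{\sum_{i=1}^n \lambda_i(y)\, y_i}{\sum_{i=1}^n \lambda_i(y)}, \qquad \lambda_i(y) := \max\{0,\ \varepsilon - \norm{y - y_i}\}.
\]
The denominator never vanishes because $\{y_i\}$ is an $\varepsilon$-net, $P_\varepsilon$ is continuous, and one checks directly that $\norm{P_\varepsilon(y) - y} < \varepsilon$ for all $y \in K$, since $P_\varepsilon(y)$ is a convex combination of exactly those $y_i$ lying within distance $\varepsilon$ of $y$.

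The second step is to apply the finite-dimensional fixed-point theorem. Set $C_\varepsilon := \mathrm{conv}\{y_1,\dots,y_n\}$, a nonempty compact convex subset of the finite-dimensional subspace spanned by the $y_i$; by the convexity of $A$ noted above, $C_\varepsilon \subseteq A$. The composition $T_\varepsilon := P_\varepsilon \circ T$ restricts to a continuous self-map $T_\varepsilon : C_\varepsilon \to C_\varepsilon$, since $T(C_\varepsilon) \subseteq T(A) \subseteq K$ and $P_\varepsilon$ maps $K$ into $C_\varepsilon$. By the Brouwer fixed-point theorem there exists $x_\varepsilon \in C_\varepsilon$ with $T_\varepsilon(x_\varepsilon) = x_\varepsilon$, and hence
\[
\norm{x_\varepsilon - T(x_\varepsilon)} = \norm{P_\varepsilon(T(x_\varepsilon)) - T(x_\varepsilon)} < \varepsilon.
\]

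Finally I would pass to the limit. Taking $\varepsilon = 1/k$ produces a sequence $(x_k) \subseteq A$ with $\norm{x_k - T(x_k)} \to 0$. Since $T(x_k) \in K$ and $K$ is compact, a subsequence satisfies $T(x_{k_j}) \to z$ for some $z \in K \subseteq A$; then $x_{k_j} = \bigl(x_{k_j} - T(x_{k_j})\bigr) + T(x_{k_j}) \to z$ as well. Continuity of $T$ gives $T(x_{k_j}) \to T(z)$, and comparing the two limits of $T(x_{k_j})$ yields $T(z) = z$, a fixed point lying in $A$. The essential and only genuinely nontrivial input is the finite-dimensional Brouwer theorem, which I take as the classical foundation; everything else is the approximation machinery that the compactness of $T$ makes available. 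The point demanding the most care is the self-map property $T_\varepsilon(C_\varepsilon)\subseteq C_\varepsilon$, which hinges precisely on the convexity of $A$ guaranteeing $C_\varepsilon \subseteq A$ together with the hypothesis $T(A)\subseteq A$; without convexity the finite-dimensional subproblem could leave the domain and the reduction would break down.
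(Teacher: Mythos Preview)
Your proof is correct and is the standard Schauder approximation argument. Note, however, that the paper does not prove this theorem at all: it is quoted as a classical result from \cite{Zeidler1} and used as a black box, so there is no proof in the paper to compare against.
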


\bigskip

\section{The Galerkin Method}\label{Galerkin}

\subsection{Weak form and energy estimate}

We test (\ref{eq:s,u}) with $\varphi\in C_c^{\infty}([0,T)\times\Omega)$
such that
 ${\rm div}\varphi=0$. 
 For the first and second terms we simply integrate them by parts; 
 for the third term we recall the equality $(\nabla\times B)\times B= B\cdot\nabla B -\nabla (|B|^2) /2$ and then integrate by parts;  
  the fifth term will vanish;
the calculation of the fourth term will generate the mean curvature functional:
\begin{align*}
 & \int_{\Omega}\nu(\chi)\triangle u\varphi=\int_{\Omega}\nu(\chi){\rm div}(\nabla u+\nabla u^{T})\varphi\nonumber \\
= & \int_{\Omega}\nu(\chi)\sum_{j}\sum_{i}\partial_{i}(\partial_{i}u_{j}+\partial_{j}u_{i})\varphi_{j}\\
= & \int_{\Omega}\nu(\chi)\sum_{j}\sum_{i}\partial_{i}((\partial_{i}u_{j}+\partial_{j}u_{i})\varphi_{j})-\int_{\Omega}\nu(\chi)\sum_{j}\sum_{i}(\partial_{i}u_{j}+\partial_{j}u_{i})\partial_{i}\varphi_{j}\nonumber \\
= & 2 (\nu^{+}-\nu^{-})\int_{\Gamma}n\cdot (Du \varphi) - 2\int_{\Omega}\nu(\chi)Du:D\varphi\nonumber \\
= & \kappa\int_{\Gamma}Hn\cdot\varphi-2(\nu(\chi)Du,D\varphi)_{\Omega}. 
\end{align*}
Thus, we obtain the weak formula of \eqref{eq:s,u}:
\begin{equation}\label{eq:weak,u}
\begin{aligned}
- & (u_0,\varphi(0))_{\Omega} - (u,\partial_t \varphi)_{Q_T}  -(u\otimes u,\nabla\varphi)_{Q_{T}}+(B\otimes B,\nabla\varphi)_{Q_{T}}\\
 & +2(\nu(\chi)Du,D\varphi)_{Q_{T}}-\kappa\int_{0}^{T}\int_{\Gamma(t)}Hn\cdot\varphi{d}\mathcal{H}^{2}=0.
\end{aligned}
\end{equation}
Testing \eqref{eq:s,B} with  $\varphi\in C_c^{\infty}([0,T)\times\Omega)$ 
such that ${\rm div}\varphi=0$. Using the fact that 
$$\nabla\times(\nabla\times B)=\nabla({\rm div}B) -\triangle B, \;  \text{ and } \;
\nabla\times(u\times B)=-(u\cdot\nabla)B+(B\cdot\nabla)u,$$
we obtain
\begin{equation}\label{eq:weak,B}
-(B_0,\varphi(0))_{\Omega}-(B,\partial_t \varphi)_{Q_T}
-(u\otimes B,\nabla\varphi)_{Q_T}+(B\otimes u,\nabla\varphi)_{Q_T}
+\sigma(\nabla B,\nabla\varphi)_{Q_T}  = 0.
\end{equation}
Now we derive the energy estimate. Suppose all the functions are smooth enough.
Testing \eqref{eq:s,u} and \eqref{eq:s,B} on $\Omega$ with $\varphi=u$
and $\varphi=B$ respectively, we obtain
\begin{align*}
\frac{1}{2}\frac{{d}}{{d}t}\left\| u\right\| _{L^2}^{2}+(B\otimes B,\nabla u)_{\Omega} + 2(\nu(\chi)Du,Du)_{\Omega}-\kappa\int_{\Gamma(t)}Hn\cdot u{d}\mathcal{H}^2=0,
\end{align*}
\begin{align*}
\frac{1}{2}\frac{{d}}{{d}t}\left\| B\right\| _{L^2}^{2}-(B\otimes B,\nabla u)_{\Omega}+\sigma\left\| \nabla B\right\| _{L^2}^{2}=0.
\end{align*}
Adding these two equations, we have
\begin{align*}
\frac{1}{2}\frac{{d}}{{d}t}\left\| u\right\| _{L^2}^{2}+\frac{1}{2}\frac{{d}}{{d}t}\left\| B\right\| _{L^2}^{2}+2(\nu(\chi)Du,Du)_{\Omega}\\
+\sigma\left\| \nabla B\right\| _{L^2}^{2}-\kappa\int_{\Gamma(t)}Hn\cdot u{d}\mathcal{H}^2=0.
\end{align*}
From the derivation of (1.9) in \cite{Abels1}, we have
\[
\frac{{d}}{{d}t}\mathcal{H}^2(\Gamma(t))=-\int_{\Gamma(t)}HV_{\Gamma}{d}\mathcal{H}^2=-\int_{\Gamma(t)}Hn\cdot u{d}\mathcal{H}^2.
\]
Note that by Korn's inequality, there exists $c>0$ such that
\[
2(\nu(\chi)Du,Du)_{\Omega}\geq c\left\| \nabla u\right\| _{L^2}^{2}.
\]
Finally, we obtain the energy inequality:
\begin{equation}
\begin{aligned}
\frac{1}{2}\left\| u(t)\right\| _{L^2}^{2}+\frac{1}{2}\left\| B(t)\right\| _{L^2}^{2}+\kappa\mathcal{H}^2(\Gamma(t))+c\left\| \nabla u\right\| _{L^2([0,T]\times\Omega)}^{2}+\sigma\left\| \nabla B\right\| _{L^2([0,T]\times\Omega)}^{2}\\
\leq\frac{1}{2}\left\| u_{0}\right\| _{L^2}^{2}+\frac{1}{2}\left\| B_{0}\right\| _{L^2}^{2}+\kappa\mathcal{H}^2(\Gamma_{0}).
\end{aligned}
\end{equation}
This estimate drives us to look for a solution $(u,B,\Gamma)$
such that 
$$u\in L^2 ([0,T];H_0^1(\Omega)) \cap L^{\infty}([0,T];L^2(\Omega)),\quad 
 B \in L^2 ([0,T];H_0^1(\Omega)) \cap L^{\infty}([0,T];L^2(\Omega)),$$
and
$\mathcal{H}^2(\Gamma(t))$ is bounded on $[0,T]$.

\subsection{Approximate equations}
In order to use the Galerkin method, 
we pick the eigenfunctions of the Stokes operator to be a basis.
The existence of this basis is from the following theorem:
\begin{Theorem}[\cite{3dNS}, Theorem 2.24]
\label{thm:Galerkin basis}
Let $\Omega\subseteq\mathbb{R}^3$ be a smooth bounded domain. 
Let $A$ be the Stokes operator, 
i.e. $Au:=-\mathbb{P}\triangle u$ 
where $\mathbb{P}$ is the Helmholtz projection. 
There exists a set of functions 
$\mathcal{N}=\{\eta_{1},\eta_{2},\cdots,\}$
such that \\
(1) the functions form an orthonormal basis of $\mathbb{H}(\Omega)$; \\
(2) the functions form an orthogonal basis of $\mathbb{V}(\Omega)$; \\
(3) the functions belong to $D(A)\cap C^{\infty}(\overline{\Omega})$ and they are eigenfunctions
of $A$ with positive, nondecreasing eigenvalues which goes to infinity.
\end{Theorem}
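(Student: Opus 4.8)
The plan is to realize $A$ as a positive self-adjoint operator with compact resolvent on $\mathbb{H}(\Omega)$ and then apply the spectral theorem for compact self-adjoint operators. First I would set up the variational framework: on $\mathbb{V}(\Omega)$ consider the bilinear form $a(u,v):=(\nabla u,\nabla v)_{\Omega}$, which by the Poincaré inequality is bounded and coercive, hence defines an inner product on $\mathbb{V}(\Omega)$ equivalent to the one inherited from $H_0^1(\Omega)$. For each $f\in\mathbb{H}(\Omega)$ the Lax--Milgram theorem produces a unique $u=Sf\in\mathbb{V}(\Omega)$ with $a(u,v)=(f,v)_{\Omega}$ for all $v\in\mathbb{V}(\Omega)$; this is the weak stationary Stokes problem, and $S:\mathbb{H}(\Omega)\to\mathbb{V}(\Omega)$ is bounded, symmetric and positive with respect to the $\mathbb{H}$ inner product.

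Next I would show that $S$, viewed as an operator $\mathbb{H}(\Omega)\to\mathbb{H}(\Omega)$ via the inclusion $\mathbb{V}(\Omega)\hookrightarrow\mathbb{H}(\Omega)$, is compact: since $\Omega$ is bounded, $\mathbb{V}(\Omega)\subseteq H_0^1(\Omega)$ embeds compactly into $L^2(\Omega)$ by Rellich--Kondrachov, and $S$ maps bounded sets of $\mathbb{H}$ into bounded sets of $\mathbb{V}$. The spectral theorem for compact self-adjoint operators then yields an orthonormal basis $\{\eta_k\}$ of $\mathbb{H}(\Omega)$ of eigenfunctions of $S$ with eigenvalues $\mu_k>0$ and $\mu_k\to 0$. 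Setting $\lambda_k:=\mu_k^{-1}$ and reindexing so that the $\lambda_k$ are nondecreasing, each $\eta_k$ satisfies $S\eta_k=\mu_k\eta_k$, equivalently $a(\eta_k,v)=\lambda_k(\eta_k,v)_{\Omega}$ for all $v\in\mathbb{V}(\Omega)$, so $\eta_k\in D(A)$ and $A\eta_k=\lambda_k\eta_k$ with $\lambda_k>0$ and $\lambda_k\to\infty$. This establishes part (1) and the eigenvalue assertions of part (3). For part (2), testing the eigenrelation against $v=\eta_j$ gives $(\eta_i,\eta_j)_{\mathbb{V}}=a(\eta_i,\eta_j)=\lambda_i(\eta_i,\eta_j)_{\Omega}=\lambda_i\delta_{ij}$, so the $\eta_k$ are $\mathbb{V}$-orthogonal; completeness in $\mathbb{V}(\Omega)$ follows because any $v\in\mathbb{V}(\Omega)$ with $(v,\eta_k)_{\mathbb{V}}=0$ for all $k$ satisfies $\lambda_k(v,\eta_k)_{\Omega}=a(v,\eta_k)=0$, whence $(v,\eta_k)_{\Omega}=0$ for all $k$ and thus $v=0$ by completeness in $\mathbb{H}(\Omega)$.

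The remaining and hardest point is the smoothness $\eta_k\in C^{\infty}(\overline{\Omega})$ claimed in part (3). Here I would invoke the interior and boundary regularity theory for the stationary Stokes system on the smooth domain $\Omega$. Each $\eta_k$, together with an associated pressure $p_k$, solves $-\triangle\eta_k+\nabla p_k=\lambda_k\eta_k$, ${\rm div}\,\eta_k=0$ in $\Omega$, and $\eta_k|_{\partial\Omega}=0$. Starting from $\eta_k\in\mathbb{V}(\Omega)\subseteq H^1(\Omega)$, the right-hand side $\lambda_k\eta_k$ lies in $L^2(\Omega)$, so the $L^2$-regularity theorem for the Stokes operator gives $\eta_k\in H^2(\Omega)$ and simultaneously identifies $D(A)=H^2(\Omega)\cap\mathbb{V}(\Omega)$. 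Feeding $\lambda_k\eta_k\in H^2(\Omega)$ back into the system and iterating the elliptic estimate raises the regularity by two derivatives at each step, giving $\eta_k\in H^m(\Omega)$ for every $m$; the Sobolev embedding theorem then yields $\eta_k\in C^{\infty}(\overline{\Omega})$. The main obstacle is precisely this bootstrap: it rests on the full Stokes regularity theorem, whose proof uses the smoothness of $\partial\Omega$ and a careful treatment of the pressure to close the estimates, and is considerably deeper than the functional-analytic spectral argument that produces the basis in the first place.
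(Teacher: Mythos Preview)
Your outline is the standard and correct argument: realize the Stokes operator via a coercive bilinear form, use Rellich--Kondrachov to get compactness of the solution operator, apply the spectral theorem, and then bootstrap regularity through the Stokes estimates on a smooth domain. There is nothing wrong with the strategy, and the identification of the regularity step as the deepest ingredient is accurate.

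However, the paper does not give its own proof of this theorem at all. It is quoted verbatim as Theorem~2.24 of \cite{3dNS} and used as a black box to produce the Galerkin basis $\{\eta_k\}$; no argument for it appears anywhere in the paper. So there is no ``paper's proof'' to compare your proposal against. What you have written is essentially a sketch of the proof one would find in the cited reference (or in Temam \cite{Temam1,Temam2}), and it is fine as such, but for the purposes of this paper the result is simply imported, not established.
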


Here $\mathbb{H}(\Omega)$ denotes the closure 
of $\{ \varphi\in C_c^{\infty}(\Omega) : {\rm div}\varphi=0 \}$ 
under the $L^2$ norm
and $\left\|\cdot\right\|_{\mathbb{H}(\Omega)}:=\left\|\cdot\right\|_{L^2(\Omega)}$.
The space $\mathbb{V}(\Omega):=\mathbb{H}(\Omega)\cap H_0^1(\Omega)$
and 
$\left\|\cdot\right\|_{\mathbb{V}(\Omega)}:=\left\|\cdot\right\|_{H^1(\Omega)}$.
Note that 
all the eigenfunctions $\eta_{j}$ have trace 0.
For details of the Stokes operator, see \cite{Temam2, Temam1}.

Let $G_n:=\text{span}\{\eta_1,\cdots, \eta_n\}$.
For each $t\in[0,T]$, we consider the approximate equation:
\begin{equation}\label{eq: Galerkin u}
\begin{aligned}
(u_{n}(t),\eta)_{\Omega} & - (u_{0},\eta)_{\Omega} -\int_0^t (u_{n}\otimes u_{n},\nabla\eta)_{\Omega}{d}s
+
\int_0^t (B_{n}\otimes B_{n},\nabla\eta)_{\Omega}{d}s\\
+  & 2\int_0^t (\nu(\chi_n)Du_n,D\eta)_{\Omega}{d}s
+
\kappa\int_0^t \int_{\Omega}P_{\tau}:\nabla\eta{d}\left|\nabla\chi_{n}(s)\right|{d}s = 0,   
\end{aligned}
\end{equation}
for all $\eta\in G_n$. 
We define the functionals $M$ and $N$ on $G_n$ 
and rewrite the equation. Let
$$
\left\langle M(u),\eta\right\rangle   :=\int_{\Omega}u\cdot\eta,
$$
\begin{align*}
\left\langle  N(u,\chi,B),\eta\right\rangle & :=(u\otimes u,\nabla\eta)_{\Omega}-(B\otimes B,\nabla\eta)_{\Omega}\\
& - 2(\nu(\chi)Du,D\eta)_{\Omega}+\kappa\int_{\Omega}P_{\tau}:\nabla\eta{d}\left|\nabla\chi\right|.
\end{align*}
By integrating $N(u,\chi,B)$ from $0$ to $t$, we define
$$
\left\langle \int_{0}^{t}N{d}s,\eta\right\rangle   :=\int_{0}^{t}\left\langle N(s),\eta\right\rangle {d}s.
$$
Now we can rewrite the equation as:
\begin{equation}
\left\langle M(u_{n}(t)),\eta\right\rangle =\left\langle M(u_{0}),\eta\right\rangle +\int_{0}^{t}\left\langle N(u_{n},\chi_{n},B_{n}),\eta\right\rangle {d}s
\end{equation}
for all $\eta\in G_n$.
It remains to represent $\chi$ and $B$ 
with $u$ using the solution operators,
i.e.  $\chi(u)$ and $B(u)$ .

\subsection{Solution operators $\chi(u)$ and $B(u)$}
\label{solu operators}
Suppose $u\in C([0,T];C^{2}(\overline{\Omega}))$ 
and $\Omega_0^+$ is a simply connected $C^2$-domain with $\overline{\Omega_0^+} \subseteq \Omega$.
For each $x\in\overline{\Omega}$, we consider the ODE
\begin{equation}
\begin{aligned}
&\frac{{d}}{{d}t}X(t,x) =  u\left(t,X(t,x)\right), \\
& X(0,x)=  x.
\end{aligned}
\end{equation} 
By the Picard-Lindelöf theorem there exists a unique solution locally in time.
Since the solution will not blow up as stated in Remark \ref{ode not blow up}, 
we can always extend it to $[0,T]$.
When we start from different initial values on $\overline{\Omega}$,
the solutions will not intersect. 
Thus, we obtain a function 
$X(t,x):[0,T]\times \overline{\Omega} \to \overline{\Omega}$,
which is a bijection on $\overline{\Omega}$ for each fixed $t$.
We call $X(t,x)$ the flow mapping, and denote it by $X_t(x)$ in some cases.
We will also use $X_u(t,x)$ or $X_{u,t}(x)$ if needed to emphasize 
the velocity field that generates this flow mapping.
\begin{Remark}\label{ode not blow up}
When $x\in\partial\Omega$, 
the Picard iterating always generate constant functions equal to $x$.
Thus, we can obtain a unique solution $X(t,x)\equiv x$ on $[0,T]$.
When $x\in \Omega$, the local solution will not exceed $\Omega$, 
so it can still be extended to $[0,T]$.
In both cases, the proof of uniqueness can be done by the Gronwall's inequality.
\end{Remark}
Note that $u\in C([0,T];C_0^2(\overline{\Omega}))$.
Similarly to the proof of Theorem 2.10 in \cite{Teschl},  
we can prove that 
$X\in C([0,T];C^2(\Omega))$.
Letting $\chi(x,t):=\chi_0(X_t^{-1}(x))$, 
then we have obtained the indicator function $\chi$ using the velocity $u$.

We now estimate the variation of $\chi(x,t)$.
From \cite{Amb.TE} Exercise 3.2, the Jacobian $J(X_{t})\equiv 1$.
By changing of variable, we have
\begin{align}\label{estimate chi BV}
\int_{\Omega}\chi(x,t){\rm div}\varphi(x) {d}x
= \int_{\Omega}\chi(X_{t}(y),t){\rm div}\varphi(X_{t}(y)){d}y.
\end{align}
Similarly to the argument in \cite{Abels1}, 
we integrate by parts.
Let $A=(a_{ij})_{3\times 3}$ be the matrix inverse of $\nabla X_{t}$, i.e.
$
A(y):=
(\nabla_y X_{t}(y))^{-1}.
$
Let
$
\widetilde{\varphi}(y)  =
A^T(y) \varphi(X_{t}(y)) 
$
with $A^T$ being the transpose of $A$.
For the gradient of $\widetilde{\varphi}(y) $, i.e.
$\nabla_y \left(A^T(y)\varphi(X_{t}(y))\right)$,
we consider its trace:
\begin{align*}
& \text{Tr} \nabla_y \left(A^T\varphi(X_{t}(y))\right)\\ & =\sum_{i}\sum_{j}\partial_{y_{i}}a_{ji}\cdot\varphi_{j}(X_{t}(y))+\sum_{i}\sum_{j}a_{ji}\cdot\partial_{y_{i}}\varphi_{j}(X_{t}(y))\\
& = I_{1}+I_{2}.
\end{align*}
Simplifying $I_2$, we obtain
\begin{align*}
I_{2} & =\sum_{i}\sum_{j}\sum_{k}a_{ji}\partial_{k}\varphi_{j}(X_{t}(y))\cdot\partial_{y_{i}}X_{t,k}(y)\\
= & \sum_{j}\sum_{k}\partial_{k}\varphi_{j}(X_{t}(y))\cdot\left(\sum_{i}a_{ji}\partial_{y_{i}}X_{t,k}(y)\right)\\
= & \sum_{j}\sum_{k}\partial_{k}\varphi_{j}(X_{t}(y))
\cdot\left(\sum_{i}\left((\nabla X_t)^{-1}\right)_{ji}\left(\nabla X_{t}\right)_{ik}\right)\\
= & \sum_{j}\partial_{j}\varphi_{j}(X_{t}(y))={\rm div}\varphi(X_{t}(y)).
\end{align*}
Continuing with \eqref{estimate chi BV}, we have
\begin{align*}
\int_{\Omega}\chi & (X_{t}(y),t){\rm div}\varphi(X_{t}(y)){d}y
=\int_{\Omega}\chi  (X_{t}(y),t) I_2 {d}y
\\
= & \int_{\Omega}\chi(X_{t}(y),t)\text{Tr}\nabla_y (A^T \varphi(X_{t}(y)))-\int_{\Omega}\chi(X_{t}(y),t)I_1
\\
= & \int_{\Omega}\chi(X_{t}(y),t)\text{Tr}\nabla_y ( A^T \varphi(X_{t}(y)))-\int_{\Omega}\chi(X_{t}(y),t)\sum_{i}\sum_{j}\partial_{y_{i}}a_{ji}\cdot\varphi_{j}(X_{t}(y))\\
= & \int_{\Omega}\chi_{0}(y)\text{Tr}\nabla_y (\widetilde{\varphi}(y))-\int_{\Omega}\chi_{0}(y)\sum_{i}\sum_{j}\partial_{y_{i}}a_{ji}\cdot\varphi_{j}(X_{t}(y)) \\
= & I_3 - I_4.
\end{align*}
Since $\chi_0\in {BV}(\Omega)$, we have
\begin{align*}
&\left|
I_3
\right|
=
\left|
\int_{\Omega}\chi_{0}(y){\rm div}_y \widetilde{\varphi}(t,y)
\right|
\leq 
\left\| \nabla\chi_{0} \right\|_{\mathcal{M}(\Omega) } 
\left\| \widetilde{\varphi} \right\|_{L^{\infty}([0,T]\times\Omega)}
\\
&
\leq 
C
\left\| \nabla\chi_{0} \right\|_{\mathcal{M}(\Omega) } 
\left\| \varphi \right\|_{L^{\infty}(\Omega)} 
 \left\| \nabla A \right\|_{L^{\infty}([0,T]\times\Omega)}
\\
&
\leq 
\left\| \chi_0 \right\|_{{BV}(\Omega) } 
\left\| \varphi \right\|_{L^{\infty}(\Omega)} 
\beta(\left\| u \right\|_{C([0,T];C^2(\overline{\Omega}))} ).
\end{align*}
The notation $\beta(\cdot)$ denotes a continuous function. 
In $\left\| \nabla A\right\|_{L^{\infty}(\Omega)}$, we firstly find the Euclidean norm $| \nabla A|$ and then find the $L^{\infty}$ norm of $| \nabla A|$. The situations later will be treated in the same way.  
We then estimate $I_4$:
\begin{align*}
&\left|
I_4
\right|
\leq 
C\left\| \nabla A \right\|_{L^{\infty}([0,T]\times\Omega)}
\left\| \chi_{0} \right\|_{L^1 (\Omega)} 
\left\| \varphi(X_{t}(y)) \right\|_{L^{\infty}(\Omega)}
\\
&
\leq
\left\| \chi_0 \right\|_{{BV}(\Omega) } 
\left\| \varphi \right\|_{L^{\infty}(\Omega)} 
\beta(\left\| u \right\|_{C([0,T];C^2(\overline{\Omega}))} ).
\end{align*}
We still use the notation $\beta(\cdot)$, so it represents
different continuous functions in different contexts.
The estimates of $I_3$ and $I_4$ implies
\begin{align*}
\left|
\int_{\Omega}\chi (X_{t}(y),t){\rm div}\varphi(X_{t}(y)){d}y
\right|
\leq 
\left\| \chi_0 \right\|_{{BV}(\Omega) } 
\left\| \varphi \right\|_{L^{\infty}(\Omega)} 
\beta(\left\| u \right\|_{C([0,T];C^2(\overline{\Omega}))} ).
\end{align*}
Thus, we have 
$$\mathcal{V}(\chi(t),\Omega)\leq \left\| \chi_0 \right\|_{{BV}(\Omega) } 
\beta(\left\| u \right\|_{C([0,T];C^2(\overline{\Omega}))}).$$
Noticing that $\left\| \chi(t)\right\| _{L^{1}(\Omega)}=\left\| \chi_{0}\right\| _{L^{1}(\Omega)}$
and $\left\| \nabla\chi(t)\right\| _{\mathcal{M}(\Omega)}\leq \mathcal{V}(\chi(t),\Omega)$, 
one has
\begin{align*}
& \left\| \chi(t)\right\| _{BV(\Omega)} 
 = \left\| \chi(t)\right\| _{L^1(\Omega)} + \left\| \nabla\chi(t)\right\| _{\mathcal{M}(\Omega)} \\
& \leq\left\| \chi_{0}\right\| _{L^{1}(\Omega)}+\mathcal{V}(\chi(t),\Omega)
\leq  \beta(\left\| u\right\| _{C([0,T];C^{2}(\overline{\Omega}))})\left\| \chi_{0}\right\| _{BV(\Omega)}.
\end{align*}
\begin{Remark}
In order to control $ (\nabla X_{t})^{-1}$ 
with $\beta(\left\| u \right\|_{C([0,T];C^2(\overline{\Omega}))})$, 
we only need to consider $\nabla X_{t} $.
This is because $\text{det}(\nabla X_{t}) =1$, which implies that $(\nabla X_t)^{-1}=\text{adj}(\nabla X_t)$.
We take the derivatives of the following equation: 
\begin{equation}
X(t,x)=x+\int_0^t u(s,X(s,x)) {d}s,
\end{equation}
and then we use the Gronwall's inequality.
In order to estimate $\partial_{y_{i}}a_{ji}$, 
we take the derivatives of the equation 
$ (\nabla X_{t})^{-1} = \text{adj}(\nabla X_{t}) $.
It remains to estimate the second derivatives of $X(t,x)$,
which can be solved similarly by the Gronwall's inequality.
\end{Remark}
\begin{Remark}
When  $\left\|u\right\|_{C([0,T]; C^2(\overline{\Omega}))}\leq R$
we have $\left\|\chi(u)\right\|_{L^{\infty}([0,T];BV(\Omega))}\leq C(R)$.
\end{Remark}

Now we study the operator $B(\cdot)$. We recall the Lemma 3.2 from \cite{hw}.
\begin{Lemma}[\cite{hw}, Lemma 3.2]\label{lemma Hu}  
Let $\Omega\subseteq\mathbb{R}^{3}$ be a bounded $C^3$-domain
and $u\in C([0,T];C_{0}^{2}(\overline{\Omega}))$.
There exists a unique solution operator $B(\cdot)$, such that 
$B(u)$ solves \eqref{eq:s,B}, \eqref{divB 0} and \eqref{eq:s,initial} in the weak sense. 
Given any bounded set $A\subseteq C([0,T];C_{0}^{2}(\overline{\Omega}))$,
the image $B(A)$ is bounded in $L^2([0,T];H_0^1(\Omega))\cap L^{\infty}([0,T];L^2(\Omega))$
and $B(\cdot)$ is continuous on $A$.
\end{Lemma}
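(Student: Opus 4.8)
\emph{Proof proposal.} Using the same vector identities as in the derivation of \eqref{eq:weak,B} --- $\nabla\times(\nabla\times B)=-\triangle B$ and $\nabla\times(u\times B)=(B\cdot\nabla)u-(u\cdot\nabla)B$ for divergence-free $u,B$ --- the system \eqref{eq:s,B}, \eqref{divB 0}, \eqref{eq:s,initial} is equivalent, for divergence-free fields, to the linear parabolic problem $\partial_t B-\sigma\triangle B+(u\cdot\nabla)B-(B\cdot\nabla)u=0$ with ${\rm div}\,B=0$, $B|_{\partial\Omega}=0$ and $B|_{t=0}=B_0$, in which $u$ is a given coefficient field bounded in $C([0,T];C_0^2(\overline{\Omega}))$. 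The plan is to solve it by a Galerkin scheme built on the \emph{same} divergence-free eigenbasis $\{\eta_j\}$ of Theorem \ref{thm:Galerkin basis}: seek $B_n(t)=\sum_{j\le n}b_j(t)\eta_j$ satisfying the equation tested against each $\eta_k$, $k\le n$, with $B_0$ replaced by its $L^2$-projection onto $G_n$. This is a linear ODE system with coefficients continuous in $t$, hence globally solvable on $[0,T]$, and each $B_n$ is divergence-free because the $\eta_j$ are.

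Testing the Galerkin equation with $B_n$ annihilates the transport term, $((u\cdot\nabla)B_n,B_n)_\Omega=-\frac{1}{2}\int_\Omega({\rm div}\,u)|B_n|^2=0$ by ${\rm div}\,u=0$ and $u|_{\partial\Omega}=0$, while the stretching term is controlled by $\|\nabla u(t)\|_{L^\infty}\|B_n(t)\|_{L^2}^2$, so that
\begin{equation*}
\frac{1}{2}\frac{{d}}{{d}t}\|B_n(t)\|_{L^2}^2+\sigma\|\nabla B_n(t)\|_{L^2}^2\le\|\nabla u(t)\|_{L^\infty}\|B_n(t)\|_{L^2}^2 .
\end{equation*}
Gronwall's inequality together with integration in time bounds $\|B_n\|_{L^\infty([0,T];L^2)\cap L^2([0,T];H_0^1)}$ by a constant depending only on $\|u\|_{C([0,T];C^2(\overline{\Omega}))}$ and $\|B_0\|_{L^2}$, uniformly in $n$ and over any bounded $A$. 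Since the equation is linear in $B$, the terms $(u\cdot\nabla)B_n$, $(B_n\cdot\nabla)u$ and $\sigma\triangle B_n$ are bounded in $L^2([0,T];\mathbb{V}(\Omega)^*)$, which gives a uniform bound on $\partial_t B_n$ there; an Aubin--Lions argument together with weak-$*$ compactness extracts a subsequence converging to a weak solution $B=:B(u)$ of \eqref{eq:s,B}, \eqref{divB 0}, \eqref{eq:s,initial}, which is divergence-free (inherited from the $\eta_j$). The uniform energy bound shows that $B(A)$ is bounded in $L^2([0,T];H_0^1(\Omega))\cap L^\infty([0,T];L^2(\Omega))$.

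For uniqueness and continuity, take $u_1,u_2\in A$, set $B_i:=B(u_i)$ and $W:=B_1-B_2$; subtracting the equations,
\begin{equation*}
\partial_t W-\sigma\triangle W+(u_1\cdot\nabla)W-(W\cdot\nabla)u_1=(B_2\cdot\nabla)(u_1-u_2)-\big((u_1-u_2)\cdot\nabla\big)B_2 ,\qquad W|_{t=0}=0 .
\end{equation*}
Testing with $W$, the left-hand side yields $\frac{1}{2}\frac{{d}}{{d}t}\|W\|_{L^2}^2+\sigma\|\nabla W\|_{L^2}^2$ up to a term $\le\|\nabla u_1\|_{L^\infty}\|W\|_{L^2}^2$; on the right, $(B_2\cdot\nabla)(u_1-u_2)$ is controlled by $\|\nabla(u_1-u_2)\|_{L^\infty}\|B_2\|_{L^2}\|W\|_{L^2}$, and in the last term the derivative is integrated by parts onto $W$ --- legitimate since $W$ and $B_2$ vanish on $\partial\Omega$ and ${\rm div}(u_1-u_2)=0$ --- giving a bound $\|u_1-u_2\|_{L^\infty}\|B_2\|_{L^2}\|\nabla W\|_{L^2}$, whose gradient factor is absorbed by Young's inequality into $\sigma\|\nabla W\|_{L^2}^2$. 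Using $\|B_2\|_{L^\infty([0,T];L^2)}\le C(A)$ from the previous step and Gronwall's inequality,
\begin{equation*}
\|W\|_{L^\infty([0,T];L^2)}^2+\|W\|_{L^2([0,T];H_0^1)}^2\le C(A)\,\|u_1-u_2\|_{C([0,T];C^2(\overline{\Omega}))}^2 .
\end{equation*}
Taking $u_1=u_2$ gives uniqueness of the weak solution (so $B(\cdot)$ is well defined), and the displayed inequality gives Lipschitz continuity of $B(\cdot)$ on $A$ into $L^2([0,T];H_0^1(\Omega))\cap L^\infty([0,T];L^2(\Omega))$.

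The step I expect to be most delicate is not any individual estimate but keeping the divergence constraint consistent with the weak formulation: the reduction of \eqref{eq:s,B}, \eqref{divB 0} to the parabolic problem above, and its equivalence with the weak form \eqref{eq:weak,B}, both rely on ${\rm div}\,B=0$, which is exactly why the divergence-free Stokes eigenbasis of Theorem \ref{thm:Galerkin basis} has to be used for the approximations here as well; a secondary point is that in the difference estimate the derivative in $\big((u_1-u_2)\cdot\nabla\big)B_2$ must be moved onto $W$, since only $\|B_2\|_{L^2}$, and not $\|\nabla B_2\|_{L^\infty}$, is available from the energy bound.
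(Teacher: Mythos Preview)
The paper does not prove this lemma; it is quoted verbatim as Lemma~3.2 of \cite{hw} and used as a black box. Your proposal supplies a complete self-contained argument via a Galerkin scheme on the Stokes eigenbasis, an energy estimate closed by Gronwall, Aubin--Lions compactness to pass to the limit, and a difference estimate for uniqueness and Lipschitz continuity. The argument is correct and is the standard route for such linear parabolic problems; it is essentially what one would expect to find upon consulting the reference.

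One minor point worth flagging: the lemma as stated does not assume ${\rm div}\,u=0$, but your reduction of \eqref{eq:s,B} to $\partial_t B-\sigma\triangle B+(u\cdot\nabla)B-(B\cdot\nabla)u=0$ and the cancellation $((u\cdot\nabla)B_n,B_n)_\Omega=0$ both use it. In this paper the operator $B(\cdot)$ is only ever applied to $u\in G_n\subset C_{c,\sigma}^\infty(\Omega)^{\overline{\phantom{a}}}$, so the restriction is harmless in context. To match the lemma exactly one keeps the extra term $-B\,{\rm div}\,u$ coming from the full identity $\nabla\times(u\times B)=(B\cdot\nabla)u-(u\cdot\nabla)B-B\,{\rm div}\,u$ (with ${\rm div}\,B=0$); this, together with $((u\cdot\nabla)B_n,B_n)_\Omega=-\tfrac12\int_\Omega({\rm div}\,u)|B_n|^2$, contributes only terms bounded by $\|{\rm div}\,u\|_{L^\infty}\|B_n\|_{L^2}^2$ and is absorbed by the same Gronwall step.
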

We will show later that the condition $u\in C([0,T];C_0^2(\overline{\Omega}))$ 
will be guaranteed. 
Thus, we can always use the operator $B(\cdot)$ when solving the approximate equations. 
Note that when $\left\|u\right\|_{C([0,T];C^2(\overline{\Omega}))}\leq R$, 
we have 
$\left\|B(u)\right\|_{L^2([0,T];H_0^1(\Omega))} 
+ \left\|B(u)\right\|_{ L^{\infty}([0,T];L^2(\Omega)) } \leq C(R)$.

\subsection{Estimating the operator $N(u,\chi,B)$}
Substituting $\chi(u)$ and $B(u)$, 
we obtain $N(u)=N(u,\chi(u),B(u))$. 
We estimate its operator norm.
For convenience we denote the formula by 
\begin{align*}
\left\langle N(u),\eta\right\rangle  = I_1 + I_2+I_3+I_4,
\end{align*}
where
\begin{align*}
&I_1 = \int_{\Omega}u\otimes u:\nabla\eta {d}x, \\
&I_2 = -\int_{\Omega}B\otimes B:\nabla\eta {d}x, \\
&I_3 = -2\int_{\Omega}\nu(\chi)Du:D\eta {d}x,  \\
&I_4 = \kappa \int_{\Omega}P_{\tau}:\nabla\eta{d}|\nabla\chi|.
\end{align*}
We estimate the integrals as follows.
Notice that the $C^1$ norm is equivalent to the $G_n$ norm
in the finite dimensional space $G_n$. Thus, we obtain
$$
|I_1|\leq\int_{\Omega}|u|^2|\nabla\eta|\leq\left\| \eta\right\| _{C^1(\Omega)}\left\| u\right\|_{L^2(\Omega)}^2 \leq C\left\| u\right\| _{G_{n}}^2 \left\| \eta\right\| _{G_n}.
$$
Similarly, we estimate $I_2$ and $I_3$ as
$$
|I_2|\leq\left\| \eta\right\| _{C^1(\Omega)}\left\| B\right\| _{L^2(\Omega)}^{2}\leq C\left\| B\right\|_{L^2(\Omega)}^{2}\left\| \eta\right\| _{G_{n}},
$$
$$
|I_3|
\leq C\left\| Du\right\| _{L^2(\Omega)}\left\| D\eta\right\| _{L^2(\Omega)}\\
\leq C\left\| u\right\| _{G_{n}}\left\| \eta\right\| _{G_{n}}.
$$
Using the fact that $|(P_{\tau})_{ij}|=|\delta_{ij}-n_i n_j|\leq 1$, 
we have
$$
\left|P_{\tau}:\nabla\eta\right|\leq\left|P_{\tau}\right|\left|\nabla\eta\right|\leq C\left|\nabla\eta\right|\leq C\left\| \eta\right\| _{G_{n}},
$$
and then we obtain
$$
|I_4| \leq
\left\| \chi\right\| _{BV(\Omega)}
\left\| P_{\tau}:\nabla\eta\right\| _{L^{\infty}(\Omega)}\leq C\left\| \chi\right\| _{BV(\Omega)}\left\| \eta\right\| _{G_{n}}.
$$
Thus, the operator norm of $N(u)$ is estimated as the following:
\begin{equation}\label{estimate functional N}
\left\| N(u)\right\| _{G_{n}^{*}}(t)\leq C\left(\left\| u\right\| _{G_{n}}^{2}+\left\| u\right\| _{G_{n}}+\left\| B(u)\right\| _{L^2(\Omega)}^{2}+\left\| \chi(u)\right\| _{BV(\Omega)}\right)(t).
\end{equation}

\subsection{Properties of the iterating operator}
In order to construct the iterating operator, 
we consider the equation
\begin{equation}
 M(u(t)) = M(u_{0}) +\int_0^t N(u(s)){d}s,
\end{equation}
which can be rewritten as:
\begin{equation}
 u(t) = M^{-1}( M(u_{0})) +M^{-1}\left( \int_0^t N(u(s)){d}s \right) .
\end{equation}
In order to prove $M$ is invertible,
we suppose $M(\eta)=0$ in $G_n^*$, then we have $\left\| \eta \right\|_{L^2(\Omega)}^2=\langle M(\eta),\eta \rangle=0$. 
Since $\eta$ is a continuous function, 
we have $\eta=0$. Thus, $M:G_n\to G_n^*$ is invertible. 
\begin{Remark}
When $u_0\in L^2 (\Omega)$, the functional $M(u_0)\in G_n^*$ is still well defined. 
\end{Remark}
Let $\widetilde{u}_0:=M^{-1}(M(u))$, we define
\begin{equation}
K(u(t)) := M^{-1}\left(M(u_{0}) + \int_{0}^{t}N(u(s)) {d}s \right)
=\widetilde{u}_0 +M^{-1}\left( \int_{0}^{t}N(u(s)) {d}s \right).
\end{equation}
For convenience, we define the set:
$$A_{a,b}:=\{u\in C([0,a],G_{n}):\left\| u\right\| _{L^{\infty}([0,a];G_{n})}\leq b\}.$$
\begin{Remark}
Since all norms are equivalent in $G_{n}$,
we pick an arbitrary norm and fix it to be our $\left\| \cdot\right\| _{G_{n}}$.
We will consider the properties 
of $K(\cdot)$ on $A_{T^*,R}$.
In fact, $K(\cdot)$ becomes a compact operator on $A_{T^*,R}$ for suitable $T^*$ and $R$.
\end{Remark}
Given $u\in A_{T,R}$. 
From \eqref{estimate functional N} 
and Section \ref{solu operators},
 we have the following estimate: 
\begin{equation}\label{estimate operator N time}
\left\| N(u)\right\| _{L^{\infty}([0,T];G_n^*)}
\leq 
C(R).
\end{equation}
We now study the properties of $K$. 
Firstly, we study the continuity of $K$ 
with respect to $t$. In fact, from \eqref{estimate operator N time},
 we have 
\begin{equation}\label{continuity of Ku in t}
\begin{aligned}
 & \left\| K(u)(t)-K(u)(s)\right\| _{G_n} \\
& \leq
\left\| M^{-1}\right\| _{\mathcal{L}(G_n^*,G_n)} 
\int_s^t\left\| N(u)\right\|_{G_n^*} (r) {d}r   \\
& \leq C(R)|t-s|.
\end{aligned}
\end{equation}
Thus, $K(u)\in C([0,T],G_{n})$.

Secondly, we study the boundedness of $K(u)$.
Still using the estimate in \eqref{estimate operator N time}, we obtain
\begin{equation} \label{estimation of K}
\left\| K(u)\right\| _{G_{n}}(t)\leq\left\| \widetilde{u}_0\right\| _{G_{n}}+C(R)t.
\end{equation}
We choose $R>\left\| \widetilde{u}_0 \right\| _{G_{n}}$
and $T^{*}$ small
enough, such that
$$
\left\| \widetilde{u}_0\right\| _{G_{n}}+C(R)T^{*} < R.
$$
Then the operator $K$ maps $A_{T^{*},R}$ into $A_{T^{*},R}$. 

Thirdly, we show that $K(\cdot)$ is a continuous operator
on $A_{T^{*},R}$. 
We fix $v\in A_{T^{*},R}$ and let 
 $u\in A_{T^{*},R}$ be such that $u\to v$ in $C([0,T^*];G_n)$.
Since
\begin{align*}
\left\| K(u)-K(v)\right\|_{G_{n}}(t) \leq\left\| M^{-1}\right\| _{\mathcal{L}(G_{n}^{*},G_{n})}\int_{0}^{t}\left\| N(u)-N(v)\right\| _{G_n^*}(s){d}s,
\end{align*}
we need to estimate $\left\| N(u)-N(v)\right\| _{G_n^*}(t)$. 
For any $t\in[0,T^{*}]$, we consider 
\begin{equation}\label{NuNv estimate}
\begin{aligned}
 \left<N(u)-N(v),\eta\right>
 =I_1+I_2+I_3+I_4\\
\end{aligned},
\end{equation}
where
\begin{equation}
\begin{aligned}
&
I_1 =
\int_{\Omega}(u\otimes u-v\otimes v):\nabla\eta, \\
& I_2 =
-\int_{\Omega}(B_{u}\otimes B_{u}-B_{v}\otimes B_{v}):\nabla\eta, \\
& I_3 = -2\int_{\Omega}\left(\nu(\chi_u)Du-\nu(\chi_v)Dv\right):\nabla\eta,  \\
& I_4 = \kappa\left( \int_{\Omega}P_{\tau}:\nabla\eta{d}
\left|\nabla\chi_{u}\right| -
\int_{\Omega}P_{\tau}:\nabla\eta{d}\left|\nabla\chi_{v}\right|\right).
\end{aligned}
\end{equation}
Here we denote $B(u)$ and $B(v)$  by $B_u$ and $B_v$ for short.  
The variable $t$ is ignored for convenience when there is no ambiguity.
The terms $I_1$ to $I_4$ are estimated as follows. 
For $I_1$, we have
\begin{align*}
|I_1| & \leq \int_{\Omega}\left|u\otimes(u-v)+(u-v)\otimes v\right||\nabla\eta|{d}x
\\
\leq & \int_{\Omega}|u||u-v||\nabla\eta| {d}x
+
\int_{\Omega}|u-v||v||\nabla\eta| {d}x
\\
\leq & \left\| u\right\|_{L^2(\Omega)} \left\| u-v\right\| _{L^2(\Omega)}
\left\| \nabla\eta\right\| _{L^{\infty}(\Omega)}
+ \left\| u-v\right\|_{L^2(\Omega)}  \left\| v\right\|_{L^2(\Omega)} 
\left\| \nabla\eta\right\| _{L^{\infty}(\Omega)} 
\\
\leq & C\left( \left\| u\right\| _{G_{n}} + \left\| v\right\| _{G_{n}} \right) \left\| u-v\right\| _{G_{n}}\left\| \eta\right\| _{G_{n}} \\
\leq & CR\left\| u-v\right\| _{G_{n}}\left\| \eta\right\| _{G_{n}}.
\end{align*}
Thus,
$\sup_{[0,T^*]}|I_1(t)|\leq CR\left\| u-v\right\| _{C([0,T^*];G_n)}\left\| \eta\right\| _{G_{n}}$.
Similarly to the estimate of $I_1$, we obtain
\begin{align*}
|I_2| & \leq\int_{\Omega}|B_u||B_{u}-B_v||\nabla\eta|+\int_{\Omega}|B_{u}-B_{v}||B_{v}||\nabla\eta|\\
\leq & C\left(\left\| B_u\right\| _{L^2}+\left\| B_v\right\| _{L^2}\right)\left\| B_u-B_v\right\| _{L^2}\left\| \eta\right\| _{G_{n}}\\
\leq & C(R)\left\| B_u-B_v\right\| _{L^2}\left\| \eta\right\| _{G_{n}}.
\end{align*}
From Lemma \ref{lemma Hu}, we have
$\sup_{[0,T^*]}|I_{2}(t)|
\leq 
C(\left\| u-v\right\| _{C([0,T^*];G_n)})\left\| \eta\right\| _{G_n}$.
Moreover, the constant $C(\left\| u-v\right\| _{C([0,T^*];G_n)})\to 0$ 
when $\left\| u-v\right\| _{C([0,T^*];G_n)}\to 0$. 
For $I_3$ we obtain
\begin{align*}
|I_3| & \leq  2\int_{\Omega}\left|\nu(\chi_u)Du-\nu(\chi_v)Dv\right||\nabla\eta|\\
\leq & 2\int_{\Omega}\left|\nu(\chi_u)\right||Du-Dv||\nabla\eta|
+
2 \int_{\Omega}\left|\nu(\chi_u)-\nu(\chi_v)\right||Dv||\nabla\eta|\\
\leq & C\int_{\Omega}|\nabla(u-v)||\nabla\eta|+C\int_{\Omega}\left|\nu^{+}\chi_{u}+\nu^{-}(1-\chi_{u})-\nu^{+}\chi_{v}-\nu^{-}(1-\chi_{v})\right||\nabla v||\nabla\eta|\\
\leq & C\left\| u-v\right\| _{G_{n}}\left\| \eta\right\| _{G_{n}}+C\left\| v\right\| _{G_{n}}\left\| \eta\right\| _{G_{n}}\int_{\Omega}\left|\chi_{u}-\chi_{v}\right|.
\end{align*}
The key point is to prove that $\int_{\Omega}\left|\chi_u-\chi_v\right|\to 0$
as $u\to v$ in $C([0,T^*];G_n)$.
In fact, $u\to v$ in $C([0,T^*];G_n)$ implies $u\to v$ in $C([0,T^*];C^{1}(\overline{\Omega}))$. 
Thus, similarly to the argument in \cite{Abels1}, 
we obtain $X_u\to X_v$
in $C([0,T^*];C^1(\Omega))$.
Let
$\Omega^+_u(t):=X_u(t,\Omega^+_0)$,
$\Omega^+_v(t):=X_v(t,\Omega^+_0)$
and
$\Gamma_u(t):=X_u(t,\Gamma_0)$, $\Gamma_v(t):=X_v(t,\Gamma_0)$.
Notice that
\begin{align*}
\int_{\Omega} & \left|\chi_u-\chi_v\right| {d}x
=
\left|\Omega^+_u \triangle \Omega^+_v \right|
\end{align*}
where $\triangle$ denotes the symmetric difference of sets.
For any $\varepsilon>0$, 
if $\left\| X_u - X_v \right\|_{C([0,T^*];C^1(\overline{\Omega}))}<\varepsilon$,
then
$\Gamma_u(t)\subseteq B(\Gamma_v(t),\varepsilon)$ for every $t\in[0,T^*]$.
Here
$B(\Gamma_v(t),\varepsilon)$ is the $\varepsilon$-neighbourhood of $\Gamma_v(t)$.
Since $v\in C([0,T^*];G_n)\subseteq C([0,T^*];C^2(\Omega))$, 
we obtain that the flow mapping $X_v(t,x)\in C([0,T^*];C^2(\Omega))$.
Since our $\Gamma_0$ is a $C^2$-surface, 
we can apply a local parameterization to $\Gamma_0$.
By composing with $X_v(t,x)$, it will naturally give us a local parameterization of $\Gamma_v(t)$. 
Suppose that $\varphi(a_1,a_2)$ 
is a $C^2$-diffeomorphism from an open set $D\subseteq \mathbb{R}^2$
to a local piece of $\Gamma_v(t)$.
Using the normal vector $n(\varphi(a_1,a_2))$,
the function
$$\psi(a_1,a_2,a_3):=\varphi(a_1,a_2)+ a_3 n(\varphi(a_1,a_2))$$
gives us a diffeomorphism from $D\times(-\varepsilon,\varepsilon)$
to an open set in $B(\Gamma_v(t),\varepsilon)$.
This allows us to 
obtain a local parameterization of $B(\Gamma_v(t),\varepsilon)$.
Notice that both $D\times(-\varepsilon,\varepsilon)$ 
and $\psi(D\times(-\varepsilon,\varepsilon))$
are monotone increasing sets as $\varepsilon$ increases.
Thus, we can obtain the boundedness of the integrands 
and then use the Lebesgue dominated convergence theorem.
When $\varepsilon\to 0$, by calculating the integrals, 
we have $|B(\Gamma_v(t),\varepsilon)|\to 0$
uniformly in $t$.
Thus, $\left\| u-v\right\| _{C([0,T^*]; G_{n})}\to 0$ implies
$$
\sup_{t\in [0,T^*]}\int_{\Omega}  \left|\chi_{u}-\chi_{v}\right|(t){d}x \to 0.
$$
Hence, $\sup_{[0,T^*]}|I_3(t)|\leq C(\left\| u-v\right\| _{C([0,T^*]; G_{n})})\left\| \eta\right\| _{G_{n}}$. 
Similarly to the constant term in $I_2$, 
the constant $C(\left\| u-v\right\| _{C([0,T^*]; G_{n})})\to 0$ 
as $\left\| u-v\right\| _{C([0,T^*];G_n)} \to 0$.

In order to estimate $I_4$, we consider the functional $F_u(t)$ such that
$$
\langle F_u(t),\eta\rangle
:=
\left\langle H_{\chi_u(t)},\eta \right\rangle -\left\langle H_{\chi_v (t)},\eta \right\rangle.
$$
Thus, $I_4(t)=\kappa\langle F_u(t), \eta \rangle$.
Suppose $\left\| u-v\right\|_{C([0,T^*]; G_n)}\to 0$,
we need to prove that 
$$
\left\| F_u\right\|_{L^{\infty}([0,T^*];G_n^*)} \to 0.
$$
Note that $\{\eta\in G_n:\left\| \eta\right\| _{G_n}= 1\}$
is a subset of $A:=\{\eta\in G_n:\left\| \eta\right\| _{C^2(\Omega)}\leq C\}$
for a suitable $C$.
Thus, it is sufficient to show 
\begin{equation}\label{Fu goes to 0}
\sup_{t\in[0,T^*]} \sup_{\eta\in A} \left| \langle F_u(t),\eta\rangle \right| \to 0.
\end{equation}
Note that since $u$, $v\in C([0,T^*],G_n)$, 
 the interfaces $\Gamma_u(t)$ and $\Gamma_v(t)$
are both $C^2$-surfaces for all $t\in[0,T^*]$.
Since $\Gamma_0$ is compact,
by applying a local parameterization and using the partition of unity,
 we can consider the integrals locally.
Let $\varphi(a_1,a_2)$ be the $C^2$-diffeomorphism from an open set $D\subseteq\mathbb{R}^2$
to a local piece on $\Gamma_0$.
The function $X_u(t,\varphi(a_1,a_2))$ allows us 
to calculate the normal vector 
$$n_u(X_u(t,\varphi(a_1,a_2)))\in C([0,T^*];C^1(D)).$$
When $u\to v$ in $C([0,T^*];C^1(\Omega))$, 
we have $X_u\to X_v$ in $C([0,T^*];C^1(\Omega))$.
Thus, $n_u\to n_v$ in $C([0,T^*]\times D)$.
Similarly, the Jacobians $J(X_u(t,\varphi(a_1,a_2)))$ 
goes to $J(X_v(t,\varphi(a_1,a_2)))$ in $C([0,T^*]\times D)$,
and the test functions $\nabla\eta(X_u(t,\varphi(a_1,a_2)))$
goes to $\nabla\eta(X_v(t,\varphi(a_1,a_2)))$ in $C([0,T^*]\times D)$ as well.
Then \eqref{Fu goes to 0} is obtained by calculating the integrals.


Thus, for all $\eta\in G_n$, we have
\begin{align*}
|I_4|
=
\kappa\left| \left\langle F_u(t) , \eta \right\rangle \right|
\leq
C(\left\|u-v \right\|_{C([0,T^*];G_n)})\left\| \eta  \right\|_{G_n}.
\end{align*}
The constant $C(\left\|u-v \right\|_{C([0,T^*];G_n)})$ goes to $0$ 
as $u\to v $ in $C([0,T^*];G_n)$.
From the estimates of $I_1$ to $I_4$, we have 
\begin{equation}
\left\| N(u)-N(v)\right\|_{C([0,T^*];G_n^*)}\leq
C(\left\| u-v \right\|_{C([0,T^*];G_n)}) \to 0.
\end{equation}
Finally, we obtain
\begin{align*}
&\left\| K(u)-K(v)\right\| _{C([0,T^*];G_n)} \\
& \leq\left\| M^{-1}\right\| _{\mathcal{L}(G_{n}^{*},G_{n})}T^{*}
\left\| N(u)-N(v)\right\| _{C([0,T^*];G_n^*)}\\
& \leq  C(\left\| u-v\right\| _{C([0,T^*];G_n)})\to 0,
\end{align*}
which implies that $K(\cdot)$ is continuous on $A_{T^{*},R}$.

Now we prove that $K(A_{T^{*},R})$ is precompact. 
Given any $v\in K(A_{T^*,R})$ and $t\in[0,T^*]$, 
there exists $u\in A_{T^*,R}$ such that $v=Ku$.
From \eqref{estimation of K}, we have
$$
\left\| v\right\| _{G_{n}}(t)=\left\| Ku \right\| _{G_{n}}(t)\leq\left\| \tilde{u}_{0}\right\| _{G_{n}}+C(R)T^*.
$$
Thus, the set $\{v(t):v\in K(A_{T^{*},R})\}\subseteq G_{n}$ is precompact since it is bounded and $G_{n}$ is finite dimensional.  
From (\ref{continuity of Ku in t}), 
the functions in $K(A_{T^{*},R})$ are equicontinuous.
Thus, from Proposition \ref{Zeidler1 Appendix 24g}, $K(A_{T^{*},R})$ is precompact. 
Since $A_{T^{*},R}$ is already bounded, the operator $K$ maps all the bounded subsets of $A_{T^{*},R}$ 
into precompact sets. 
Thus, from Definition \ref{def compact operator}, we obtain that $K$ is a compact operator.

It remains to verify the properties of the set $A_{T^{*},R}$ in $C([0,T^*];G_n)$.
Since $u(t)\equiv0$ is in $A_{T^{*},R}$, the set is non-empty. 
From the definition of $A_{T^{*},R}$, 
we know it is closed and bounded. 
For the convexity of $A_{T^*,R}$, picking any $u$ and $v$
in $A_{T^{*},R}$ and any $0\leq\theta\leq 1$, we have
$$
\begin{aligned}\left\| \theta u+(1-\theta)v\right\|_{G_{n}}(t)\leq\theta\left\| u\right\| _{G_{n}}(t)+(1-\theta)\left\| v\right\| _{G_{n}}(t) \leq R
\end{aligned}.
$$
Thus, $A_{T^{*},R}$ is convex.

From Theorem \ref{fixed point thm},  there exists a solution $u_{n}(t)\in C([0,T^{*}];G_{n})$.
Replacing the initial value $u_0$ by  $u(T^*)$
and repeating the steps above, 
we can increase the value of $T^*$. 
Currently we can only guarantee that 
there will be a limit when we increase $T^*$. 
Thus, the maximum interval would be either $[0,T^*)$ or $[0,T]$, 
where $T^*\leq T$.
When $T^*$ is being excluded from the interval, 
it actually means the solution will goes to infinity when $t$ is approaching $T^*$.
This will not happen in our problem, as shown in the following section.

\subsection{Extending the solution to $[0,T]$}
Assuming that $T^{*}<T$, we derive a contradiction using the energy estimate.
For each fixed $n$, we need to prove that
$\sup_{[0,T]}\left\| u_{n}\right\| _{G_{n}}\leq C$,
which is equivalent to
$\sup_{[0,T]}\left\| u_{n}\right\| _{L^2}\leq C$.
Since $u_n$ is the solution of the approximate equation,
we take the derivative of (\ref{eq: Galerkin u}) with respect to the variable $t$. 
Substituting $\eta$ with $u_{n}(t)$, 
and using (2.8) in \cite{Abels1},
we obtain
\begin{equation}
\begin{aligned}
\frac{1}{2}\frac{{d}}{{d}t}\left\| u_{n}\right\| _{L^2}^{2} 
+\kappa \frac{d}{dt}\left\| \nabla\chi_{n}\right\| _{\mathcal{M}(\Omega)}
 +(B_{n}\otimes B_{n},\nabla u_{n})_{\Omega}+2(\nu(\chi_n)Du_{n},Du_{n})_{\Omega}
=0.
\end{aligned}
\end{equation}
Integrating from $0$ to $t$, we have
\begin{equation} \label{Xn energy u}
\begin{aligned}
 & \frac{1}{2}\left\| u_{n}(t)\right\| _{L^2}^{2} 
+\kappa\norm{\nabla\chi_{n}(t)}_{\mathcal{M}(\Omega)}
 +\int_0^t(B_{n}\otimes B_{n},\nabla u_{n})_{\Omega}{d}s
 \\
 &+
 2\int_0^t(\nu(\chi_n)Du_{n},Du_{n})_{\Omega}{d}s 
=
\frac{1}{2}\left\| u_0\right\| _{L^2}^{2}
+
\kappa\norm{\nabla\chi_0}_{\mathcal{M}(\Omega)}.
\end{aligned}
\end{equation}
For each $u_n$, the solution operator $B(\cdot)$ gives us a weak solution of
\eqref{eq:s,B}.
Thus, by testing \eqref{eq:s,B} with $\varphi=B_{n}$ on $\Omega\times[0,t]$,
we obtain
\begin{equation} \label{Xn energy B}
\begin{aligned}
\frac{1}{2}\left\| B_{n}(t)\right\| _{L^2(\Omega)}^{2}
-
\frac{1}{2}\left\| B_0\right\| _{L^2(\Omega)}^{2}
-
\int_0^t (B_{n}\otimes B_{n},\nabla u_{n})_{\Omega}{d}s
+
\sigma\int_0^t \left\| \nabla B_{n}\right\| _{L^2(\Omega)}^{2}=0.
\end{aligned}
\end{equation}
Using the same argument as in the energy estimate, for some $c>0$, we have
\begin{equation}
\begin{aligned}
&
\frac{1}{2}\left\| u_n (t)\right\| _{L^2(\Omega)}^{2}+ 
\frac{1}{2}\left\| B_n (t)\right\| _{L^2(\Omega)}^{2} +\kappa\left\| \nabla\chi_n (t)\right\| _{\mathcal{M}(\Omega)}
+
c\int_0^t \left\| \nabla u_n(s) \right\| _{L^2(\Omega)}^{2}{d}s 
\\
&
+
\sigma\int_0^t \left\| \nabla B_n(s) \right\| _{L^2(\Omega)}^{2}{d}s
\leq\frac{1}{2}\left\| u_{0}\right\| _{L^2(\Omega)}^{2}+\frac{1}{2}\left\| B_{0}\right\| _{L^2(\Omega)}^{2}+\kappa\left\| \nabla\chi_{0}\right\| _{\mathcal{M}(\Omega)}=E_{0}
\end{aligned}
\end{equation}
for any $t\in[0,T^*]$. 
Thus, $\sup_{[0,T^*]}\left\| u_{n}(t)\right\| _{G_{n}}\leq\sup_{[0,T^*]} C \left\| u_{n}(t)\right\| _{L^2(\Omega)}\leq  C $.

From the ODE theory, we know that if the maximum interval of a solution
is
$[0,T^{*})$, then the solution must blow up at $T^*$. 
We will use the same argument. 
Picking an increasing sequence $t_{m}\in[0,T^*)$ such that $t_{m}\to T^*$,
we consider the sequence $\{u_n (t_{m})\}_{m=1}^{\infty}\subset G_{n}$.
Since $\sup_{[0,T]}\left\| u_n \right\| _{G_{n}}(t)\leq C$ and
$\text{dim} G_{n}<\infty$, 
we can find a subsequence, still denoted by $ t_m $, such that 
$u_n (t_m)\to a\in G_{n}$,
as $m\to\infty$. 
We only need to prove that $\lim_{t\to T^*}\left\| u_n (t)-a \right\|_{G_n} =0$.
Then the solution $u_n (t)$ can be continuously extended to $[0,T^*]$.
Using the Schauder fixed point theorem again, with $T^*$ being 
the new initial time, we will get a contradiction.
It then follows that $T^*=T$. 
Now we assume that $\lim_{t\to T^*}u_n (t)\neq a$,
then there exists an $\varepsilon_{0}>0$,
such that for all $\delta>0$, there exists $T^*-\delta< s <T^*$,
such that $\left\| u_n (s)-a\right\| _{G_{n}}>\varepsilon_{0}$.
Meanwhile, there exists an $m$,
 such that  $T^{*}-\delta<t_{m}<T^{*}$
and $\left\| u_n (t_m)-a\right\| _{G_{n}}<\varepsilon_{0}/2$.
Thus, we obtain
$$\left\| u_n (s)-u_n (t_m)\right\| _{G_{n}}
\geq
\left\| u_n (s)-a\right\| _{G_n}-\left\| u_n (t_m)-a\right\| _{G_n}>\varepsilon_{0}/2.$$
Recall that
\begin{align*}
& \left\| u_n (s)-u_n (t_m)\right\| _{G_{n}}
=
\left\| M^{-1}\int_{t_m}^{s}N(u_n )\right\| _{G_n}\leq\left\| M^{-1}\right\| 
\int_{t_m}^{s}\left\| N(u_n )\right\| _{G_n^*}\\
& \leq C|s-t_m|<C\delta.
\end{align*}
Let $\delta$ be small enough such that $C\delta<\varepsilon_{0}/2$, 
then we get a contradiction. 
Thus, $\left\| u_n (t)- a \right\|_{G_n} \to 0$ as $t\to T^*$.

Consequently, we have found a solution $u_n\in C([0,T];G_{n})$.
Using the solution operators, we obtain the corresponding $B_n:=B(u_n)$ and
$\chi_n:=\chi(u_n )$. 
The energy inequality
\begin{equation}\label{ineq: Galerkin energy estimate}
\begin{aligned}
\frac{1}{2}\left\| u_n (t)\right\| _{L^2(\Omega)}^{2}+\frac{1}{2}\left\| B_n (t)\right\| _{L^2(\Omega)}^{2}+\kappa\left\| \nabla\chi_n (t)\right\| _{\mathcal{M}(\Omega)}\\
+c\left\| \nabla u_n \right\| _{L^2([0,T];L^2(\Omega))}^{2}+\sigma\left\| \nabla B_n \right\| _{L^2([0,T];L^2(\Omega))}^{2}\leq E_{0}
\end{aligned}
\end{equation}
holds for all $t\in[0,T]$.

\bigskip

\section{Passing the Limit}\label{pass limit}

In this section, we study the limits of $u_n$, $B_n$ and $\chi_n$.
Recall that $$u_n\in C([0,T];G_n),\quad 
B_n\in L^2([0,T];H^1_0(\Omega))\cap L^{\infty}([0,T];L^2(\Omega)),\quad
 \chi_n\in L^{\infty}([0,T];BV(\Omega)),$$
and ${\rm div}u_n={\rm div}B_n=0$.
From the enrgy inequality \eqref{ineq: Galerkin energy estimate}, 
we have the following estimates:
\begin{equation}\label{ineq: many estimates}
\begin{aligned}
\left\Vert u_{n}\right\| _{L^{\infty}([0,T];L^2(\Omega))} & \leq\sqrt{2E_{0}}, \\
\left\| u_{n}\right\| _{L^2([0,T];H_0^1(\Omega))} & \leq\sqrt{2TE_{0}+E_{0}/c}, \\
\left\Vert B_{n}\right\| _{L^{\infty}([0,T];L^2(\Omega))} & \leq\sqrt{2E_{0}}, \\
\left\| B_{n}\right\| _{L^2([0,T];H_0^1(\Omega))} & \leq\sqrt{2TE_{0}+E_{0}/\sigma}, \\
\left\Vert \nabla\chi_{n}\right\| _{L^{\infty}([0,T];\mathcal{M}(\Omega))} & \leq E_{0}/\kappa,  \\
\left\| \chi_{n}\right\| _{L^{\infty}([0,T];BV(\Omega))} & \leq\left|\Omega\right|+E_{0}/\kappa.
\end{aligned}
\end{equation}

\subsection{Limits of $u_{n}$, $B_{n}$ and $\chi_{n}$}

From the embedding theorems, we have
$$
\mathcal{M}(\Omega) \hookrightarrow H^{-3}(\Omega).
$$
From the estimates in \eqref{ineq: many estimates} and the Banach-Alaoglu theorem
 (see \cite{Brezis}), 
we have
\begin{align*}
u_{n}\rightharpoonup^{*}u & \ \ \ \text{in}\,\,L^{\infty}([0,T];L^2(\Omega)),\\
u_{n}\rightharpoonup v & \ \ \ \text{in}\,\,L^2([0,T];H_0^1(\Omega)),\\
B_{n}\rightharpoonup^{*}B & \ \ \ \text{in}\,\,L^{\infty}([0,T];L^2(\Omega)),\\
B_{n}\rightharpoonup G &  \ \ \ \text{in}\,\,L^2([0,T];H_0^1(\Omega)),\\
\chi_{n}\rightharpoonup^{*}\chi &  \ \ \ \text{in}\,\,L^{\infty}([0,T];L^{\infty}(\Omega)),\\
\nabla\chi_{n}\rightharpoonup^{*}\zeta &  \ \ \ \text{in}\ \ L^{\infty}([0,T];H^{-3}(\Omega)),
\end{align*}
for suitable subsequences.

In order to pass the limit in nonlinear terms, we need to obtain stronger convergence properties of $u_n$.
We begin by showing an improved version of Lemma A.3 in \cite{JiangSong}.
\begin{Lemma} \label{C0Vdual.covergence}
Let $\Omega\subseteq \mathbb{R}^3$ be bounded. 
Suppose that $u_n\rightharpoonup^{*} u$  in $ L^{\infty}([0,T];L^2(\Omega))$,
and  for any $\varphi\in \mathbb{H}(\Omega)$,
\begin{equation} \label{weak.topo.convergence.u}
\sup_{t\in[0,T]} \abs{ \int_{\Omega} u_n \varphi dx - \int_{\Omega} u \varphi dx } \to 0.
\end{equation}
Then we have $u_n \to u$ in $C([0,T]; \mathbb{V}^*(\Omega))$.
\end{Lemma}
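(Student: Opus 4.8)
The plan is to combine the pointwise-in-time weak convergence hypothesis with the compact embedding $\mathbb{V}(\Omega)\hookrightarrow\hookrightarrow\mathbb{H}(\Omega)$, which holds because $\Omega$ is bounded so that $H_0^1(\Omega)\hookrightarrow\hookrightarrow L^2(\Omega)$ by Rellich--Kondrachov. Since $u_n\rightharpoonup^{*}u$ in $L^{\infty}([0,T];L^2(\Omega))$, the Banach--Steinhaus theorem provides a uniform bound $M:=\sup_n\norm{u_n}_{L^{\infty}([0,T];L^2(\Omega))}<\infty$, with also $\norm{u}_{L^{\infty}([0,T];L^2(\Omega))}\leq M$; moreover each $u_n$ is divergence free, hence $u_n(t),u(t)\in\mathbb{H}(\Omega)$ for (a.e.)\ $t$. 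Using the Gelfand triple $\mathbb{V}(\Omega)\hookrightarrow\mathbb{H}(\Omega)\hookrightarrow\mathbb{V}^{*}(\Omega)$, where $\mathbb{H}(\Omega)$ is identified with its dual via the $L^2$ inner product, one has for $h\in\mathbb{H}(\Omega)$ that
\[
\norm{h}_{\mathbb{V}^{*}(\Omega)}=\sup\left\{\,\abs{(h,\varphi)_{\Omega}}\ :\ \varphi\in\mathbb{V}(\Omega),\ \norm{\varphi}_{\mathbb{V}(\Omega)}\leq1\,\right\}.
\]

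The key step is to replace this supremum over the unit ball $\bar B:=\{\varphi\in\mathbb{V}(\Omega):\norm{\varphi}_{\mathbb{V}(\Omega)}\leq1\}$ by a supremum over a finite set. Fix $\varepsilon>0$. Since $\bar B$ is precompact in $\mathbb{H}(\Omega)$, i.e.\ in $L^2(\Omega)$, I would choose $\varphi_1,\dots,\varphi_N\in\bar B$ forming a net of $\bar B$ of radius $\varepsilon/(4M)$ in the $L^2$-norm. Applying the hypothesis \eqref{weak.topo.convergence.u} to each of the finitely many $\varphi_j$ yields an $N_0$ with $\sup_{t\in[0,T]}\abs{(u_n(t)-u(t),\varphi_j)_{\Omega}}<\varepsilon/2$ for all $n\geq N_0$ and all $j$. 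Then for $n\geq N_0$, (a.e.)\ $t$, and any $\varphi\in\bar B$, picking $j$ with $\norm{\varphi-\varphi_j}_{L^2(\Omega)}<\varepsilon/(4M)$,
\[
\abs{(u_n(t)-u(t),\varphi)_{\Omega}}\leq\abs{(u_n(t)-u(t),\varphi_j)_{\Omega}}+\norm{u_n(t)-u(t)}_{L^2(\Omega)}\norm{\varphi-\varphi_j}_{L^2(\Omega)}<\frac{\varepsilon}{2}+2M\cdot\frac{\varepsilon}{4M}=\varepsilon.
\]
Taking the supremum over $\varphi\in\bar B$ and then over $t$ gives $\sup_{t\in[0,T]}\norm{u_n(t)-u(t)}_{\mathbb{V}^{*}(\Omega)}\leq\varepsilon$ for all $n\geq N_0$, i.e.\ $u_n\to u$ in $C([0,T];\mathbb{V}^{*}(\Omega))$; the uniform limit is continuous because $u_n\in C([0,T];G_n)\subseteq C([0,T];\mathbb{V}^{*}(\Omega))$, and it agrees a.e.\ with the weak-$*$ limit $u$, which we take as its representative.

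I expect the only genuinely delicate point to be exactly this upgrade from pointwise-in-time, $\mathbb{H}$-weak convergence to uniform strong convergence in the weaker space $\mathbb{V}^{*}(\Omega)$; it is driven by the compact embedding $\mathbb{V}(\Omega)\hookrightarrow\hookrightarrow\mathbb{H}(\Omega)$ (making $\bar B$ precompact in $L^2$) together with the uniform $L^2$-bound $M$, which absorbs the net error uniformly in $t$. The remaining items --- the Banach--Steinhaus bound, the identification of the $\mathbb{V}^{*}$--$\mathbb{V}$ duality pairing with the $L^2$ inner product on $\mathbb{H}(\Omega)$, and the measurability and continuity bookkeeping --- are routine and parallel the Arzela--Ascoli type arguments already used in Section \ref{Galerkin}.
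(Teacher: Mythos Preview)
Your proof is correct and rests on the same key ingredient as the paper's: the precompactness in $L^2(\Omega)$ of the unit ball of $\mathbb{V}(\Omega)$ (via Rellich--Kondrachov), combined with the uniform $L^{\infty}([0,T];L^2)$ bound coming from weak-$*$ convergence. The only difference is organizational---the paper argues by contradiction, extracting a single $L^2$-convergent subsequence $\varphi_n\to\varphi$ from a sequence of near-maximizing test functions, whereas you give a direct finite $\varepsilon$-net argument; these are interchangeable ways of exploiting the same compactness and yield the same conclusion.
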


\begin{proof}
Assume that $u_n$ do not converge to $u$ in $C^0([0,T]; \mathbb{V}^*)$, 
then there exists $\varepsilon_0 > 0$ and $t_n\in[0,T]$,
 such that
\begin{equation}
\norm{ u_n  - u}_{\mathbb{V}^*} (t_n) 
> \varepsilon_0.
\end{equation}
Thus, there exist $\varphi_n\in \mathbb{V}(\Omega)$ with 
$\left\| \varphi_n\right\|_{ \mathbb{V} } \equiv 1$,
such that 
\begin{equation} \label{dualpair.lowerbound}
\abs{ \dualpair{u_n,\varphi_n}(t_n) - \dualpair{u,\varphi_n}(t_n) } > \frac{\varepsilon_0}{2},
\end{equation}
where $\dualpair{\cdot, \cdot}$ denotes the dual pair on a space and its dual. 
Since $\varphi_n\in \mathbb{V}= \mathbb{H} \cap H_0^1$ 
and $H_0^1 \hookrightarrow \hookrightarrow L^2$,
there exists $\varphi\in L^2(\Omega)$ 
such that $\varphi_n\to \varphi$ in $L^2(\Omega)$.
Using the fact that $\norm{\varphi_n}_{H_0^1(\Omega)}\leq C$, 
we can obtain $\varphi\in \mathbb{V}(\Omega)$.
Now we have
\begin{equation}
\sup_{t\in[0,T] } \abs{ \dualpair{ u_n - u, \varphi_n }}
\leq
\sup_{t\in[0,T] } \abs{ \dualpair{ u_n - u, \varphi_n -\varphi }}
+
\sup_{t\in[0,T] } \abs{ \dualpair{ u_n - u, \varphi }}.
\end{equation}
The first term on the right hand side goes to 0 since $u_n$ and $u$ are bounded
in $L^{\infty}([0,T];L^2(\Omega))$, and $\varphi_n \to \varphi$ in $L^2(\Omega)$; the second term goes to 0 
by the condition \eqref{weak.topo.convergence.u} of this lemma. This  contradicts with \eqref{dualpair.lowerbound}, 
which completes the proof.
\end{proof}

In order to use the lemma above,
we still need to verify \eqref{weak.topo.convergence.u}.
We recall that elements $\eta_i$ 
form an orthonormal basis of $\mathbb{H}(\Omega)$ 
and an orthogonal basis of $\mathbb{V}(\Omega)$.
Let $\eta$ be a finite linear combination of $\eta_i$.
For all sufficiently large $n$,
the Galerkin approximate equations
\begin{equation} \label{Galerkin.equation}
\begin{aligned}
&\int_{\Omega} u_n (t) \eta - \int_{\Omega} u_0 \eta
=
\int_0^t \int_{\Omega} u_n\otimes u_n : \nabla \eta 
- \int_0^t \int_{\Omega} B_n\otimes B_n : \nabla \eta 
\\
&
- \int_0^t \int_{\Omega} \nu(\chi_n) Du_n : D \eta
- \kappa\int_0^t \int_{\Omega} P_{\tau}  : \nabla \eta d\abs{\nabla \chi_n}.
\end{aligned}
\end{equation}
all hold for this $\eta$.
Considering the terms 
$f_n(t):= \int_{\Omega} u_n (t)\eta dx$,
we claim that $f_n$ have a uniformly convergent subsequence. 
First, we proof that $f_n(t)$ are equicontinuous.
Given $0\leq s< t \leq T$. 
Since $\eta$ is fixed, from \eqref{Galerkin.equation} we have
\begin{equation} \label{proving.equicontinuous}
\begin{aligned}
&
\abs{f_n (t) - f_n (s)  }
\leq
C\int_s^t \norm{u_n}_{L^2}^2  
 +\norm{B_n}_{L^2}^2
+ \norm{\nabla u_n}_{L^1}
+ \norm{\nabla \chi_n}_{\mathcal{M}}
\\
&
\leq 
\left(  \norm{u_n}_{L^{\infty} L^2}^2
+ \norm{B_n}_{L^{\infty} L^2}^2
+ \norm{\nabla \chi_n}_{L^{\infty}\mathcal{M}}
\right) \abs{t-s}
+\int_0^T \chi_{[s,t]}\norm{\nabla u_n}_{L^1}
\\
&
\leq
C \abs{t-s} + C \sqrt{t-s},
\end{aligned}
\end{equation}
which implies that $f_n$ is equicontinuous on $[0,T]$.
By letting $s=0$ we can show that $f_n$ is uniformly bounded.
Thus, by the  Arzela-Ascoli theorem, there exists a subsequence, 
still denoted by $f_n$, such that
\begin{equation}
\sup_{t\in[0,T]}\abs{f_n(t) - g(t) } \to 0
\end{equation}
for some $g(t)\in C[0,T]$, as $n\to\infty$.

We recall that $u_n\rightharpoonup u$ in 
$L^2([0,T];H_0^1(\Omega))$.
Now we want to show that 
\begin{equation} \label{g.equals.integral}
g=\int_{\Omega} u\eta
\end{equation}
almost everywhere on $[0,T]$.
Given any $\varphi\in L^2[0,T]$.
Since $\eta\varphi\in L^2([0,T];L^2(\Omega))$ 
and $u_n\rightharpoonup^{*} u$  in $ L^{\infty}([0,T];L^2(\Omega))$,
 we have
\begin{equation}
\int_0^T f_n \varphi dt
=
\int_0^T \int_{\Omega} u_n \eta \varphi dxdt
\to
\int_0^T \int_{\Omega} u \eta \varphi dxdt,
\end{equation}
which implies that 
$f_n \rightharpoonup \int_{\Omega} u \eta$ weakly in $L^2[0,T]$.
Since $f_n\to g$ in $C[0,T]$, we also have
$f_n \rightharpoonup g$ weakly in $L^2[0,T]$, 
which proves \eqref{g.equals.integral}.

Now we prove that \eqref{weak.topo.convergence.u} holds for all $\varphi\in \mathbb{H}(\Omega)$.
Picking $\eta_1$, using the argument above we can find a subsequence of $u_n$, 
denoted by $u_{1n}$,
such that 
$
\int_{\Omega} u_{1n} \eta_1
\to
\int_{\Omega} u \eta_1
$ in $C[0,T]$.
Now picking $\eta_2$ and using the same argument, we can obtain a subsequence of $u_{1n}$, denoted by $u_{2n}$, such that
$
\int_{\Omega} u_{2n} \eta_2
\to
\int_{\Omega} u \eta_2
$ in $C[0,T]$.
Repeating these steps we can obtain $u_{mn}$ for any $m,n\in\mathbb{N}$.
Notice that for each $m$, the sequence $u_{nn}$ is a subsequence of $u_{mn}$ after finitely many terms. Thus, the convergence
\begin{equation}
\lim_{n\to\infty}
\sup_{t\in[0,T]} \abs{ \int_{\Omega} u_{nn} \eta_k - u \eta_k } =0
\end{equation}
holds for any  $k\in\mathbb{N}$.
The argument in \eqref{weak.topo.convergence.u} then follows the fact that 
$\{ \eta_1,  \eta_2 , \cdots \} $ is a basis of $ \mathbb{H}(\Omega)$.

Now we estimate $\partial_{t}B_{n}$. 
Picking $\varphi\in C_{c}^{\infty}(\Omega)$ with ${\rm div}\varphi =0$, we have
\begin{align*}
\int_{\Omega} \partial_{t}B_{n}\varphi
= I_1+ I_2 +I_3,
\end{align*}
where
\begin{align*} 
& I_1
=\int_{\Omega} u_n\nabla B_n \varphi, \\
& I_2
=\int_{\Omega} B_n\nabla u_n \varphi, \\
& I_3
=  \sigma \int_{\Omega} \nabla B_n :\nabla\varphi.
\end{align*}
The estimates are given as the following:
\begin{align*}
& |I_1|  \leq
\left\| u_n\right\| _{L^{3}(\Omega)}\left\| \nabla B_n\right\| _{L^2(\Omega)}\left\| \varphi\right\| _{L^{6}(\Omega)}
\leq 
C \left\Vert u_{n}\right\|_{L^2(\Omega)}^{\frac{1}{2}} 
\left\| u_{n}\right\|_{H_0^1(\Omega)}^{\frac{1}{2}} \left\| B_{n}\right\| _{H_0^1(\Omega)}\left\| \varphi\right\| _{H_0^1(\Omega)},  \\
& |I_2| \leq\left\| B_n \right\| _{L^{3}(\Omega)}\left\| \nabla u_n \right\| _{L^2(\Omega)}\left\| \varphi \right\| _{L^{6}(\Omega)}
\leq 
C \left\Vert B_{n}\right\| _{L^2(\Omega)}^{\frac{1}{2}}
\left\| B_{n}\right\| _{H_0^1(\Omega)}^{\frac{1}{2}}\left\| u_{n}\right\| _{H_0^1(\Omega)}\left\| \varphi\right\| _{H_0^1(\Omega)},  \\
& |I_3| \leq
C   \left\| \nabla B_n \right\| _{L^2(\Omega)}\left\| \nabla\varphi\right\| _{L^2(\Omega)}
\leq
C  \left\| B_n \right\| _{H_0^1(\Omega)}\left\| \varphi\right\| _{H_0^1(\Omega)}.
\end{align*}
Thus, we have
\begin{align*}
 \left\| \partial_{t}B_{n}\right\| _{H^{-1}(\Omega)} &  
 \leq 
C \left\Vert u_{n}\right\|_{L^2(\Omega)}^{\frac{1}{2}} 
\left\| u_{n}\right\|_{H_0^1(\Omega)}^{\frac{1}{2}} \left\| B_{n}\right\| _{H_0^1(\Omega)}
\\
 + &
C \left\Vert B_{n}\right\| _{L^2(\Omega)}^{\frac{1}{2}}
\left\| B_{n}\right\| _{H_0^1(\Omega)}^{\frac{1}{2}}\left\| u_{n}\right\| _{H_0^1(\Omega)}
+
C  \left\| B_n\right\| _{H_0^1(\Omega)}.
\end{align*}
Integrating with respect to $t$, we obtain
\begin{align*}
& \int_{0}^{T}  \left\| \partial_{t}B_{n}\right\| _{H^{-1}(\Omega)}  
 \leq
C 
\left\| u_{n}\right\|_{L^{\infty}([0,T];L^2(\Omega))}^{\frac{1}{2}} 
\int_{0}^{T}
\left\| u_{n}\right\|_{H_0^1(\Omega)}^{\frac{1}{2}} \left\| B_{n}\right\| _{H_0^1(\Omega)} 
{d}t \\
& +
C 
\left\| B_{n}\right\|_{L^{\infty}([0,T];L^2(\Omega))}^{\frac{1}{2}} 
\int_{0}^{T}
\left\| B_{n}\right\|_{H_0^1(\Omega)}^{\frac{1}{2}} \left\| u_{n}\right\| _{H_0^1(\Omega)} 
{d}t
+ 
C  \left\| B_n\right\| _{L^1([0,T];H_0^1(\Omega))}
\\
&\leq
C \left\| u_{n}\right\|_{L^{\infty}([0,T];L^2(\Omega))}^{\frac{1}{2}} 
\left\| u_{n}\right\|_{L^2([0,T];H_0^1(\Omega))}^{\frac{1}{2}} 
\left\| B_{n}\right\|_{L^2([0,T];H_0^1(\Omega))}
\\
& +
C \left\| B_{n}\right\|_{L^{\infty}([0,T];L^2(\Omega))}^{\frac{1}{2}} 
\left\| B_{n}\right\|_{L^2([0,T];H_0^1(\Omega))}^{\frac{1}{2}} 
\left\| u_{n}\right\|_{L^2([0,T];H_0^1(\Omega))}
+ C \left\| B_n\right\| _{L^2([0,T];H_0^1(\Omega))}.
\end{align*}
Thus, $\left\| \partial_t B_n\right\| _{L^1([0,T];H^{-1}(\Omega))}\leq C$.
By the Aubin-Lions lemma, we can find a suitable subsequence, still denoted by
$B_n$, such that
$B_{n}\to K$ strongly in $L^2([0,T];L^2(\Omega))$ 
for some $K\in L^2([0,T];L^2(\Omega))$.
We can prove the uniqueness of limits using the following argument.

\begin{Proposition}\label{uniqueness of limit}
For the strong and weak limits $B$, $G$ and $K$ of $B_n$, we have $B=G=K$.
\end{Proposition}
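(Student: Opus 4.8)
The plan is to reduce the claim to uniqueness of weak limits in the Hilbert space $L^2(Q_T)$, once we check that all three convergences can be tested against a common class of functions.

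First I would observe that strong convergence implies weak convergence, so from $B_n\to K$ in $L^2([0,T];L^2(\Omega))$ we obtain $B_n\rightharpoonup K$ in $L^2([0,T];L^2(\Omega))=L^2(Q_T)$. Next, the embedding $L^2([0,T];H_0^1(\Omega))\hookrightarrow L^2([0,T];L^2(\Omega))$ is continuous and linear, hence weak-to-weak continuous; therefore $B_n\rightharpoonup G$ in $L^2([0,T];H_0^1(\Omega))$ also yields $B_n\rightharpoonup G$ in $L^2(Q_T)$. By uniqueness of the weak limit in $L^2(Q_T)$, this already gives $G=K$.

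It then remains to identify the weak-$*$ limit $B$. Since $L^2(\Omega)$ is reflexive, $L^\infty([0,T];L^2(\Omega))$ is the dual of $L^1([0,T];L^2(\Omega))$, so $B_n\rightharpoonup^* B$ means that $\int_0^T (B_n,\varphi)_{\Omega}\,dt\to\int_0^T (B,\varphi)_{\Omega}\,dt$ for every $\varphi\in L^1([0,T];L^2(\Omega))$. Because $T<\infty$, H\"older's inequality in the time variable gives the inclusion $L^2(Q_T)\subseteq L^1([0,T];L^2(\Omega))$, so we may test against an arbitrary $\varphi\in L^2(Q_T)$; comparing the limit obtained this way with the strong limit $K$ yields $(B-K,\varphi)_{Q_T}=0$ for all such $\varphi$, and hence $B=K$ almost everywhere. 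Combined with $G=K$, this proves $B=G=K$.

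The argument is essentially routine; the only step requiring a little care is the identification of the predual of $L^\infty([0,T];L^2(\Omega))$ together with the inclusion $L^2(Q_T)\subseteq L^1([0,T];L^2(\Omega))$, both of which rely on the reflexivity of $L^2(\Omega)$ and the finiteness of $[0,T]$. I do not anticipate any genuine obstacle.
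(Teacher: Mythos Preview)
Your proof is correct and follows essentially the same approach as the paper: both arguments reduce everything to uniqueness of weak limits in $L^2(Q_T)$ by observing that the embeddings $L^2([0,T];H_0^1(\Omega))\hookrightarrow L^2(Q_T)$ and $L^\infty([0,T];L^2(\Omega))\hookrightarrow L^2(Q_T)$ transport the respective weak and weak-$*$ convergences into weak convergence in $L^2(Q_T)$. Your version is slightly more explicit about the predual $L^1([0,T];L^2(\Omega))$ and the inclusion $L^2(Q_T)\subseteq L^1([0,T];L^2(\Omega))$, but the substance is the same.
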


\begin{proof}
Since $ L^2([0,T];H_0^1(\Omega))\subseteq L^2([0,T];L^2(\Omega))$,
$B_n\rightharpoonup G$ in $L^2([0,T];H_0^1(\Omega))$ 
implies that $B_n\rightharpoonup G$ in $L^2([0,T];L^2(\Omega))$. 
By the uniqueness of the limit, we have $G=K$.
Since $ L^{\infty}([0,T];L^2(\Omega))\subseteq L^2([0,T];L^2(\Omega))$,
$B_n\rightharpoonup^{*} B$ in $L^{\infty}([0,T];L^2(\Omega))$
implies that  $B_n\rightharpoonup^{*} B$ in $L^2([0,T];L^2(\Omega))$.
Similarly to the first step, we have $B=K$. Thus, $B=G=K$. 
\end{proof}

We now consider the transport equation 
 $\partial_{t}\chi_n +u_n \cdot\nabla\chi_n=0$.
For $\varphi\in C_{c}^{\infty}(\Omega)$,
we have 
\begin{align*}
\left|  \int_{\Omega} \partial_{t}\chi_{n} \varphi \right|
=\left| \int_{\Omega} \chi_{n}u_{n}\cdot\nabla\varphi \right| 
\leq
\left\| u_{n}\right\| _{L^2(\Omega)}\left\| \varphi\right\| _{H_0^1(\Omega)}.
\end{align*}
Thus, $\left\| \partial_{t}\chi_{n}\right\| _{H^{-1}(\Omega)}\leq\left\| u_{n}\right\| _{L^2(\Omega)}$,
which implies that 
$$
\left\| \partial_{t}\chi_{n}\right\| _{L^2([0,T];H^{-1}(\Omega))}
\leq
\left\| u_{n}\right\| _{L^2([0,T];H_0^1(\Omega))}
\leq C.
$$
Therefore, using the same argument as in \cite{Abels1}, Section 5.2, 
we can find a suitable subsequence, still denoted by $\chi_n$,
such that $\chi_{n}\to \chi$ strongly
in $L^2([0,T];L^2(\Omega))$.
Now we prove that $\zeta=\nabla\chi$ in the weak sense. 
For almost every $t\in[0,T]$
and
$\varphi\in C_{c}^{\infty}(\Omega)$, we have
\[
\int_{\Omega}\chi_{n}{\rm div}\varphi {d}x=-\int_{\Omega}\varphi\cdot{d}\nabla\chi_{n}
\]
for any $n$.
Since ${\rm div}\varphi\in L^{1}(\Omega)$,
we have
\begin{align*}
\int_{\Omega}\chi_{n}(t){\rm div}\varphi {d}x
& \longrightarrow\int_{\Omega}\chi(t){\rm div}\varphi{d}x,  \\
\int_{\Omega}\varphi \cdot {d}\nabla\chi_{n}(t) & \longrightarrow\int_{\Omega} \varphi \cdot{d}\zeta(t),
\end{align*}
which implies that $\nabla\chi=\zeta$ for almost every $t\in [0,T]$.

Therefore, we finally obtain
\begin{align*}
u_{n}\rightharpoonup^{*}u & \ \ \  \text{in}\,\,L^{\infty}([0,T];L^2(\Omega)),\\
u_{n}\rightharpoonup u & \ \ \  \text{in}\,\,L^2([0,T];H_0^1(\Omega)),\\
u_n \to u& \ \ \  \text{in}\,\, C([0,T]; \mathbb{V}^*(\Omega)),\\
B_{n}\rightharpoonup^{*}B &  \ \ \  \text{in} \ L^{\infty}([0,T];L^2(\Omega)),\\
B_{n}\rightharpoonup B & \ \ \  \text{in} \ L^2([0,T];H_0^1(\Omega)),\\
B_{n}\to B & \ \ \  \text{in} \ L^2([0,T];L^2(\Omega)),\\
\chi_{n}\rightharpoonup^{*}\chi & \ \ \  \text{in} \ L^{\infty}([0,T];L^{\infty}(\Omega)),\\
\nabla\chi_{n}\rightharpoonup^{*}\nabla\chi & \ \ \  \text{in} \ 
L^{\infty}([0,T];H^{-3}(\Omega)),\\
\chi_{n} \to \chi & \ \ \  \text{in} \ L^2 ([0,T];L^2(\Omega)).\\
\end{align*}

\subsection{Varifold limit of $H_{\chi_{n}}$}  \label{vari limit}
Since we cannot pass the limit directly when dealing with $H_{\chi_n}$,
we will represent them with 
varifolds, and then consider the weak limit of the varifolds. 
Recall that $\Gamma_{k}(t):=X_{u_k}(t,\Gamma_0)$,
where $\Gamma_0=\partial\Omega_0^+$.
Using the same argument as in \cite{Abels1},
we consider the varifold $V_{k}(t)$ corresponding to $\Gamma_{k}(t)$,
i.e.
\begin{align*}
\left\langle V_k (t),\varphi \right\rangle  
:= \int_{\Omega}\varphi\left(  x, n_k (t,x)\right){d}\left|\nabla\chi_k (t)\right|
\end{align*}
for any
$\varphi\in C_0(\Omega \times\mathbb{S}^{2})$,
where $n_k (t,x):= \nabla\chi_k (t,x) / \left|\nabla\chi_k (t,x)\right| $.
Since
$$
\left|\left\langle V_k (t),\varphi \right\rangle \right|
\leq
\left\| \nabla\chi_k (t) \right\|_{\mathcal{M}(\Omega)}
\left\| \varphi \right\|_{C_0(\Omega\times\mathbb{S}^{2})},
$$
we have
$$ \left\| V_k (t) \right\|_{\mathcal{M}(\Omega\times\mathbb{S}^2)} 
\leq \left\| \nabla\chi_k (t) \right\|_{\mathcal{M}(\Omega)}.$$
Now for all $\varphi\in L^1([0,T];C_0(\Omega\times\mathbb{S}^{2}))$, we define
\begin{align*}
\left\langle V_{k},\varphi\right\rangle  & :=\int_{0}^{T}\int_{\Omega}\varphi\left(t, x,n_k (t,x)\right){d}\left|\nabla\chi_{k}(t)\right| {d}t.
\end{align*}
Then we have
$$
\left\| V_{k}\right\| _{L_w^{\infty}([0,T];\mathcal{M}(\Omega\times\mathbb{S}^2))} \leq\left\| \nabla\chi_{k}\right\|_{L_w^{\infty}([0,T];\mathcal{M}(\Omega))}.
$$
Since $V_{k}$ is bounded in $L_w^{\infty}([0,T];\mathcal{M}(\Omega\times\mathbb{S}^{2})$
and $\mathcal{M}(\Omega\times\mathbb{S}^2)
\hookrightarrow H^{-3}(\Omega\times\mathbb{S}^2)$,
by the same argument as in \cite{Abels1},
there exists $V\in L_w^{\infty}([0,T];\mathcal{M}(\Omega\times\mathbb{S}^{2}))$,
such that
\begin{align*}
V_{k}\rightharpoonup^{*}V & \ \ \  \text{in} \ L^{\infty}([0,T];H^{-3}(\Omega\times\mathbb{S}^{2})).
\end{align*}
For $\psi\in C_{c}^{\infty}([0,T)\times\Omega)$, letting $\varphi(x,s,t)={\rm div}\psi-s\otimes s:\nabla\psi$,
we have $\varphi\in L^{1}([0,T];C_0(\Omega\times\mathbb{S}^{2}))$.
The regularity of $n_{k}(x):=-\nabla\chi_{k}/\left|\nabla\chi_{k}\right|$
is guaranteed since $\nabla\chi_{k}$ are approximating solutions.
Thus, we have 
\begin{equation}
\begin{aligned}
  -&\int_{0}^{T}\left\langle H_{\chi_{k}(t)},\psi(t)\right\rangle =\int_{0}^{T}\int_{\Omega}P_{\tau}:\nabla\psi{d}\left|\nabla\chi_{k}\right|{d}t\\
= & \int_{0}^{T}\int_{\Omega}\bigl({\rm div}\psi-n_{k}\otimes n_{k}:\nabla\psi\bigr){d}\left|\nabla\chi_{k}\right|{d}t\\
= & \int_{0}^{T}\int_{\Omega\times \mathbb{S}^{2}}\bigl({\rm div}\psi-s\otimes s:\nabla\psi\bigr){d}V_k{d}t\\
\to & \int_{0}^{T}\int_{\Omega\times \mathbb{S}^{2}}\bigl({\rm div}\psi-s\otimes s:\nabla\psi\bigr){d}V{d}t\\ 
= & \int_{0}^{T}\left\langle \delta V(t),\psi(t)\right\rangle {d}t\\
\end{aligned}
\end{equation}
as $k\to \infty$.
Letting $\varphi(x,s,t)=s\psi(x,t)$, 
we have
\begin{align*}
&\int_{0}^{T}\left\langle \nabla\chi_{k},\psi\right\rangle   =-\int_{0}^{T}\int_{\partial^{*}\{\chi_{k}=1\}}\psi\cdot n_{k}{d}\mathcal{H}^{2}{d}t\\
= & -\int_{0}^{T}\int_{\Omega}\psi\cdot n_{k}{d}\left|\nabla\chi_{k}\right|{d}t
=  -\int_{0}^{T}\int_{\Omega\times\mathbb{S}^{2}}\psi\cdot s{d}V_{k}{d}t.
\end{align*}
Since $\nabla\chi_{k}\rightharpoonup^{*}\nabla\chi$ in $L^{\infty}([0,T];H^{-3}(\Omega))$
and $V_{k}\rightharpoonup^{*}V$ in $L^{\infty}([0,T];H^{-3}(\Omega\times\mathbb{S}^{2}))$,
we have
\begin{align*}
\int_{0}^{T}\left\langle \nabla\chi_{k},\psi\right\rangle  & \to\int_{0}^{T}\left\langle \nabla\chi,\psi\right\rangle, \\
\int_{0}^{T}\int_{\Omega\times\mathbb{S}^{2}}\psi\cdot s{d}V_{k}{d}t & \to\int_{0}^{T}\int_{\Omega\times\mathbb{S}^{2}}\psi\cdot s{d}V{d}t.
\end{align*}
Therefore, using the fact that $C_c^{\infty}(\Omega)$ is dense in $C_0(\Omega)$,
the equation
\begin{equation}\label{vari solution def 1}
\begin{aligned}
\int_{\Omega\times\mathbb{S}^{2}}\psi\cdot s{d}V 
& = - \int_{\Omega}\psi{d}\nabla\chi.
\end{aligned}
\end{equation}
holds for all $\psi\in C_0(\Omega)$.

\subsection{Passing the limit}

The weak formula represented by the varifolds is
\begin{align*}
(\partial_{t}u_{n},\varphi)_{Q_{T}} & -(u_{n}\otimes u_{n},\nabla\varphi)_{Q_{T}}+(B_{n}\otimes B_{n},\nabla\varphi)_{Q_{T}}\\
+ & 2(\nu(\chi_{n})Du_{n},D\varphi)_{Q_{T}}
+
\kappa\int_{0}^{T}\left\langle \delta V_n (t),\varphi(t)\right\rangle {d}t
=0,
\end{align*}
where $\varphi\in C_c^{\infty}([0,T)\times\Omega)$ and ${\rm div}\varphi=0$.
We have
\begin{equation}
\begin{aligned}
&
\abs{\int_0^T \int_{\Omega} ( u_n\otimes u_n - u\otimes u) : \nabla \varphi}
\\
&
\leq 
\abs{\int_0^T \int_{\Omega}  u_n\otimes( u_n - u) : \nabla \varphi}
+
\abs{\int_0^T \int_{\Omega} ( u_n - u) \otimes u : \nabla \varphi}
=: I_1 + I_2.
\end{aligned}
\end{equation}
In  $I_1$, the integrand  equals to
$( u_n - u) \cdot  \left( u_n \nabla \varphi \right)$. 
Since $u_n$ and $\varphi$ are smooth, by direct calculation 
we have ${\rm div}(u_n \nabla \varphi)=0$,
which implies $u_n \nabla \varphi \in  \mathbb{V}(\Omega)$.
Recall that $\norm{\cdot}_{\mathbb{V}}:= \norm{\cdot}_{H_0^1}$ and
$u_n$ is bounded in $L^2([0,T];H_0^1(\Omega))$, 
so we have
\begin{equation}
\begin{aligned}
&
I_1
=
\int_0^T \langle u_n-u, \psi \rangle_{\mathbb{V}^*,\mathbb{V}}
\leq   
\norm{u_n-u}_{L^{\infty}\mathbb{V}^*}\norm{u_n \nabla \varphi}_{L^1 \mathbb{V}}
\\
&\leq
C\norm{u_n-u}_{L^{\infty}\mathbb{V}^*}\norm{u_n \nabla \varphi}_{L^2 H_0^1} \to 0.
\end{aligned}
\end{equation}
The second term goes to 0 since $u_n \rightharpoonup u$ weakly in $L^2([0,T];H_0^1)$
 and $u\nabla\varphi\in L^2([0,T];H_0^1)$.
 
For the term $(B_{n}\otimes B_{n},\nabla\varphi)_{Q_{T}}$,
we have
\begin{align*}
 & \left| \int_0^T\int_{\Omega} B_{n}\otimes B_{n}: \nabla\varphi
 - B\otimes B: \nabla\varphi  dxdt   \right|\\
\leq & \left\| B_{n}\right\| _{L^2(Q_T)}\left\| B_{n}-B\right\| _{L^2(Q_T)}\left\| \nabla\varphi\right\| _{L^{\infty}(Q_T)}+\left\| B\right\| _{L^2(Q_T)}\left\| B_{n}-B\right\| _{L^2(Q_T)}\left\| \nabla\varphi\right\| _{L^{\infty}(Q_T)},
\end{align*}
which converges to 0.
For the viscosity term, we have
\begin{align*}
 & \left|(\nu(\chi_{n})Du_{n},D\varphi)_{Q_{T}}-(\nu(\chi)Du,D\varphi)_{Q_{T}}\right|\\
= & \left|\int_{0}^{T}\int_{\Omega}\left(\chi_{n}\nu^{+}Du_{n}+(1-\chi_{n})\nu^{-}Du_{n}\right):D\varphi-\int_{0}^{T}\int_{\Omega}\left(\chi\nu^{+}Du+(1-\chi)\nu^{-}Du\right):D\varphi\right|\\
\leq & \nu^{+}\left|\int_{0}^{T}\int_{\Omega}\chi_{n}Du_{n}:D\varphi-\chi Du:D\varphi\right|+\nu^{-} \left| \int_{0}^{T}\int_{\Omega}(1-\chi_{n})Du_{n}:D\varphi-(1-\chi)Du:D\varphi \right|.
\end{align*}
We only need to show the convergence of the first term,
since the second term can be shown by the same argument.
For the first term, we have
\begin{equation}\label{Du weak converge in L2}
\begin{aligned}
 & \left|\int_{0}^{T}\int_{\Omega}\chi_{n}Du_{n}:D\varphi-\chi Du:D\varphi \right|\\
\leq & \int_{0}^{T}\int_{\Omega}\left|\chi_{n}-\chi \right| \left| Du_{n} \right| \left| D\varphi \right| +\left|\int_{0}^{T}\int_{\Omega}Du_{n}:(\chi D\varphi)-\int_{0}^{T}\int_{\Omega}Du:(\chi D\varphi)\right|.
\end{aligned}
\end{equation}
Since $u_{n}\rightharpoonup u$ in $L^2([0,T];H_0^1(\Omega))$
and $\varphi\in C_{c}^{\infty}([0,T)\times\Omega)$, we have $Du_{n}\rightharpoonup Du$
in $L^2([0,T];L^2(\Omega))$ and $\chi D\varphi\in L^2([0,T];L^2(\Omega))$.
Thus, the second term goes to 0.
Since $\bigl|Du_{n}\bigr|\bigl|D\varphi\bigr|\in L^2([0,T];L^2(\Omega))$
and $\chi_{n}\to\chi$ strongly in $L^2([0,T];L^2(\Omega))$, we
have 
\[
\int_{0}^{T}\int_{\Omega}\bigl|\chi_{n}-\chi\bigr|\bigl|Du_{n}\bigr|\bigl|D\varphi\bigr|
\leq
C\left\| \chi_{n}-\chi\right\| _{L^2L^2}\to 0,
\]
which finishes the proof.

For the transport equation,
\begin{align*}
(\chi_{0},\varphi(x,0))_{\Omega}+(\chi_{n},\partial_{t}\varphi)_{Q_{T}}+(\chi_{n},u_{n}\cdot\nabla\varphi)_{Q_{T}}= & 0.
\end{align*}
Since $\partial_t \varphi\in L^{1}([0,T];L^{1}(\Omega))$, we have $(\chi_{n},\partial_{t}\varphi)_{Q_{T}}\to(\chi,\partial_{t}\varphi)_{Q_{T}}$.
For the third term, we have
\begin{align*}
 & \left| \int_{0}^{T}\int_{\Omega}\chi_{n}u_{n}\cdot\nabla\varphi-\int_{0}^{T}\int_{\Omega}\chi u\cdot\nabla\varphi \right| \\
\leq & \left|\int_{0}^{T}\int_{\Omega}\chi_{n}u_{n}\cdot\nabla\varphi-\int_{0}^{T}\int_{\Omega}\chi_{n}u\cdot\nabla\varphi\right|+\left|\int_{0}^{T}\int_{\Omega}\chi_{n}u\cdot\nabla\varphi-\int_{0}^{T}\int_{\Omega}\chi u\cdot\nabla\varphi\right|\\
\leq & \left\| \chi_{n}\right\| _{L^2(Q_T)}\left\| u_{n}-u\right\| _{L^2(Q_T)}\left\| \nabla\varphi\right\| _{L^{\infty}(Q_T)}+\left\| \chi_{n}-\chi\right\| _{L^2(Q_T)}\left\| u\right\| _{L^2(Q_T)}\left\| \nabla\varphi\right\| _{L^{\infty}(Q_T)},
\end{align*}
which goes to $0$ as $n\to \infty$.

From Section \ref{vari limit}, we have proved that
$$
\int_{0}^{T}\left\langle \delta V_n (t),\varphi\right\rangle {d}t
\to  \int_{0}^{T}\left\langle \delta V(t),\varphi\right\rangle {d}t.
$$
Therefore, by letting $n\to \infty$, we obtain
\begin{align*}
-(u_{0},\varphi(0))_{\Omega}-(u,\partial_{t}\varphi)_{Q_{T}} & -(u\otimes u,\nabla\varphi)_{Q_{T}}+(B\otimes B,\nabla\varphi)_{Q_{T}}\\
+ & 2(\nu(\chi)Du,D\varphi)_{Q_{T}}+\kappa\int_{0}^{T}\left\langle \delta V(t),\varphi(t)\right\rangle {d}t=0
\end{align*}
 for all $\varphi\in C_{c}^{\infty}([0,T)\times\Omega)$ with ${\rm div}\varphi=0$.
From \eqref{vari solution def 1}, we have
\begin{align*}
\int_{\Omega\times\mathbb{S}^{2}}\psi\cdot s{d}V & =-\int_{\Omega}\psi{d}\nabla\chi
\end{align*}
for all $\psi\in C_0(\Omega)$.
From Proposition 2.2 in \cite{Abels1},
$\chi$ is the unique renormalized solution of 
\begin{align*}
&\partial_{t}\chi+u\cdot\nabla\chi=0 \ \ \ \text{in }Q_{T}, \\
& \chi|_{t=0}=\chi_{0}\ \ \  \text{in }\Omega.
\end{align*}
It remains to prove that
$(u,B,\chi,V)$ satisfies the generalized energy inequality.
Since $ u_n \to u$, $ B_n\to B$ in $L^2([0,T];L^2(\Omega))$.
For  suitable subsequences,
we have $ u_n (t)\to u(t)$, $ B_n (t)\to B(t)$ in $L^2(\Omega)$ 
for almost every $t\in[0,T]$.
From Theorem 1.1.1 in \cite{Evans2}, we have
$$
\left\| u (t)\right\|_{L^2}\leq 
\liminf_{n\to\infty} \left\| u_n (t)\right\|_{L^2},
$$
$$
\left\| B (t)\right\|_{L^2}\leq 
\liminf_{n\to\infty} \left\| B_n (t)\right\|_{L^2}.
$$
In Section \ref{vari limit} we have proved 
$ \left\| V_n (t) \right\|_{\mathcal{M}(\Omega\times\mathbb{S}^2)} 
\leq \left\| \nabla\chi_n (t) \right\|_{\mathcal{M}(\Omega)} $.
Given any $\varphi\in C_0(\Omega)$, we have
\begin{align*} 
\left| \int_{\Omega\times\mathbb{S}^2}\varphi {d}V (t) \right| 
=
&
\left| \lim_{n\to\infty} \int_{\Omega\times\mathbb{S}^2}\varphi {d}V_n (t)  \right| 
\leq 
\liminf_{n\to\infty} \left\| \varphi \right\|_{L^{\infty}}  
\left\| V_n (t) \right\|_{\mathcal{M}(\Omega\times\mathbb{S}^2)}   
\\
&
\leq 
\liminf_{n\to\infty} \left\| \varphi \right\|_{L^{\infty}}  
\left\| \nabla\chi_n (t) \right\|_{\mathcal{M}(\Omega)}.
\end{align*}
Thus, 
$$\left\| V(t) \right\|_{{\mathcal{M}(\Omega\times\mathbb{S}^2)}} 
\leq 
\liminf_{n\to\infty} \left\| \nabla\chi_n (t) \right\|_{\mathcal{M}(\Omega)}.$$
Since $\nabla B_n \rightharpoonup \nabla B$ in $L^2([0,T];L^2(\Omega))$.
For all $t\in[0,T]$, we still have $\nabla B_n \rightharpoonup \nabla B$ in $L^2([0,t];L^2(\Omega))$.
Thus,
$$ 
\int_0^t \left\| \nabla B \right\|_{L^2}^2 
\leq 
\liminf_{n\to\infty} \int_0^t \left\| \nabla B_n \right\|_{L^2}^2.
$$
For the viscosity term, notice that
\begin{align*}
&f_n:=\left(\nu(\chi_n)Du_n - \nu(\chi_n)Du\right):(Du_n-Du)
\\
&=\left( \chi_n \nu^+  + (1-\chi_n) \nu^- \right) \left| Du_n-Du \right|^2
\geq 0.
\end{align*}
From $Du_n \rightharpoonup Du $ in $L^2([0,T];L^2(\Omega))$, we have
$$\int_0^t \int_{\Omega} f_n \leq C.$$ 
Thus, using Fatou's lemma, we obtain
\begin{equation}\label{lower cont viscosity term}
\begin{aligned}
&0\leq \int_0^t \int_{\Omega} \liminf_{n\to\infty} f_n 
\leq  \liminf_{n\to\infty} \int_0^t \int_{\Omega} f_n
\\
&=
\liminf_{n\to\infty} \int_0^t \int_{\Omega} \nu(\chi_n) Du_n:Du_n
+\lim_{n\to\infty} \int_0^t \int_{\Omega} -\nu(\chi_n) Du_n:Du
\\
&
+\lim_{n\to\infty} \int_0^t \int_{\Omega} -\nu(\chi_n) Du:Du_n
+\lim_{n\to\infty} \int_0^t \int_{\Omega} \nu(\chi_n) Du:Du
\\
&=
\liminf_{n\to\infty} \int_0^t \int_{\Omega} \nu(\chi_n) Du_n : Du_n
-\int_0^t \int_{\Omega} \nu(\chi)Du:Du.
\end{aligned}
\end{equation}
This is because \eqref{Du weak converge in L2} yields
$ \chi_n Du_n \rightharpoonup \chi Du $ in $L^2([0,T];L^2(\Omega))$,
which implies that
\begin{align*}
&\lim_{n\to\infty} \int_0^t \int_{\Omega} -\nu(\chi_n)Du_n:Du
=
\lim_{n\to\infty} \int_0^t \int_{\Omega} -\nu(\chi_n)Du:Du_n
\\
&=
-\int_0^t \int_{\Omega} \nu(\chi)Du:Du.
\end{align*}
Since $|Du|^2\in L^1([0,T];L^1(\Omega))$ 
and $\chi_n \rightharpoonup^* \chi$ in  $L^{\infty}([0,T];L^{\infty}(\Omega))$,
we have
\begin{align*}
\lim_{n\to\infty} \int_0^t \int_{\Omega} \nu(\chi_n)Du:Du
=
\int_0^t \int_{\Omega} \nu(\chi)Du:Du.
\end{align*}
Thus, the lower-semicontinuity in \eqref{lower cont viscosity term} has been proved.

Recall that \eqref{Xn energy u} and \eqref{Xn energy B} give us
\begin{equation}\label{final energy ineq}
\begin{aligned}
\frac{1}{2}  \left\| u_n (t)\right\| _{L^2}^{2} 
& +\frac{1}{2}\left\| B_n (t)\right\| _{L^2}^{2}+\kappa\left\| \nabla\chi_n (t)\right\| _{\mathcal{M}}
+ 2 \int_0^t \int_{\Omega} \nu(\chi_n)Du_n:Du_n   {d}x{d}s
\\
&
+\sigma\int_0^t \left\| \nabla B_n \right\| _{L^2}^{2} {d}s
\leq\frac{1}{2}\left\| u_{0}\right\| _{L^2}^{2}+\frac{1}{2}\left\| B_{0}\right\| _{L^2}^{2}+\kappa\left\| \nabla\chi_{0}\right\| _{\mathcal{M}}.
\end{aligned}
\end{equation}
Taking the $\liminf$ on \eqref{final energy ineq},  and using the fact that $\liminf a_n + \liminf b_n \leq \liminf (a_n+b_n)$,
we finally obtain
\begin{equation}
\begin{aligned}
\frac{1}{2}  \left\| u(t)\right\| _{L^2}^{2} 
& +\frac{1}{2}\left\| B(t)\right\| _{L^2}^{2}+\kappa\left\| V(t)\right\| _{\mathcal{M}(\Omega\times\mathbb{S}^2)}
+ 2 \int_0^t \int_{\Omega} \nu(\chi)Du:Du  {d}x{d}s
\\
+
&
\sigma\int_0^t \left\| \nabla B\right\| _{L^2}^{2} {d}s
\leq\frac{1}{2}\left\| u_{0}\right\| _{L^2}^{2}+\frac{1}{2}\left\| B_{0}\right\| _{L^2}^{2}+\kappa\left\| \nabla\chi_{0}\right\| _{\mathcal{M}},
\end{aligned}
\end{equation}
which finishes the proof of the energy inequality.

Therefore, we have finished the proof of Theorem \ref{main theorem}.
\bigskip

\bigskip\bigskip

\section*{Acknowledgments}
I would like to thank Dr. Dehua Wang, Dr. Armin Schikorra, Dr. Ming Chen and Dr. Ian Tice
for valuable comments and suggestions as well as helpful discussions.
 I would also like to thank Dr. Helmut Abels for 
careful explanations of the techniques in \cite{Abels1}.
I really appreciate that!

\bigskip\bigskip

\end{document}